\newcommand{ \Rn} {\mathbb{R}^n}
\newcommand{ \Rm} {\mathbb{R}^m}
\newcommand{ \Ru} {\mathbb{R}}
\newcommand{\Om}{\Omega}
\newcommand{\mA}{\mathcal{A}}
\newcommand{\mB}{\mathcal{B}}
\newcommand{\scu}{\longrightarrow}
\newcommand{\so}[1]{W^{1,p}(#1)}
\newcommand{\lp}[1]{L^{p}(#1)}
\newcommand{\sol}[1]{W^{1,p}_{loc}(#1)}
\newcommand{\anso}[1]{W^{1,p}_X(#1)}
\newcommand{\ciuno}[1]{C^{1}(\overline{#1})}
\newcommand{\lul}{L^1_{loc}(\Om)}
\newcommand{\lu}{L^1(\Om)}
\newcommand{\eps}{\varepsilon}
\newcommand{\cla}{I_{m,p}(a,c_0,c_1,c_2)}
\newcommand{\clam}{\mathcal{M}_{p}(d_1,d_2,d_3,d_4,\mu)}
\newcommand{\claf}{\mathcal{K}_{m,p}(a,c_0,c_1,c_2)}
\newcommand{\clad}{J_{m,p}(a,c_0,c_1,c_2)}
\newcommand{\clat}{K_{m,p}(a,c_0,c_1,c_2)}
\newcommand{\clafd}{\mathcal{J}_{m,p}(a,c_0,c_1,c_2,q,\omega)}
\newcommand{\claft}{\mathcal{I}_{m,p}(a,c_0,c_1,c_2)}
\newcommand{\clagt}{\mathcal{U}_{m,p}(a,c_1,c_2)}
\newcommand{\clafto}{\mathcal{W}_{m,p}(a,c_1,c_2,\omega)}
\newcommand{\clag}{\mathcal{V}_{m,p}(a,c_1,c_2)}
\newcommand{\claftot}{\mathcal{W}_{m,p}(a,c_0,c_1,c_2,\Tilde{\omega})}
\newcommand{\winf}[1]{W^{1,\infty}(#1)}
\newtheorem{thm}{Theorem}[section]
\newtheorem{prop}[thm]{Proposition}
\newtheorem{deff}[thm]{Definition}
\theoremstyle{definition}
\newtheorem*{rem}{Remark}
\newcommand{\average}{{\mathchoice {\kern1ex\vcenter{\hrule height.4pt
				width 6pt
				depth0pt} \kern-9.7pt} {\kern1ex\vcenter{\hrule height.4pt width 4.3pt
				depth0pt}
			\kern-7pt} {} {} }}
\newcommand{\normaansoom}[1]{\Vert#1\Vert_{\anso{\Om}}}
\DeclareMathOperator{\diver}{div} 
\DeclareMathOperator{\spann}{span} 
\DeclareMathOperator{\lie}{Lie} 
\title[$\Gamma$-Compactness of Some Classes of Integral Functionals]{$\Gamma$-Compactness of Some Classes of Integral Functionals Depending on Vector Fields}
\author[Fares Essebei]{Fares Essebei}
\address[Fares Essebei]{Department of Mathematics, University of Trento, Via Sommarive 14, 38123 Povo (Trento), Italy}
\email[Fares Essebei]{fares.essebei@unitn.it}
\author[Simone Verzellesi]{Simone Verzellesi}
\address[Simone Verzellesi]{Department of Mathematics, University of Trento, Via Sommarive 14, 38123 Povo (Trento), Italy}
\email[Simone Verzellesi]{simone.verzellesi@unitn.it}
\numberwithin{equation}{section}
\begin{document}
\maketitle
\begin{abstract}
    In this paper we achieve some $\Gamma$-compactness results for suitable classes of integral functionals depending on a given family of Lipschitz vector fields, with respect to both the strong $L^p-$topology and the strong $W_X^{1,p}-$topology.
\end{abstract}
\section{Introduction}
Starting form the seminal works by E. De Giorgi and T. Franzoni (\cite{degiorgisolo,degiorgi}), the study of $\Gamma$-convergence has pervaded the evolution of modern mathematical analysis, and has developed in several different directions, exhibiting important applications to many branches of calculus of variations, such as homogenization, minimal surfaces and partial differential equations.
For an exhaustive introduction to this topic, we refer to the monographs \cite{dalmaso, braides, braidefra}. \\
Since the late 1970s, G. Buttazzo and G. Dal Maso has investigated $\Gamma$-convergence in the framework of Lebesgue spaces, Sobolev spaces and $BV$ spaces (cf. for instance \cite{buttazzodalmaso1, buttazzodalmaso3, dalmaso2}). More recently, in \cite{franchiserapserra}, the authors started the investigation of variational functionals depending on suitable families of Lipschitz vector fields. 
By a family of Lipschitz vector fields we mean an $m-$tuple $X=(X_1,\ldots, X_m)$, with $m\leq n$, where each $X_j$ is a first-order differential operator with Lipschitz coefficients $c_{j,i}$
defined on a bounded open set $\Omega\subseteq\Rn$, i.e.
\[
X_j(x)=\,\sum_{i=1}^nc_{j,i}(x)\partial_i\quad j=1,\dots,m.
\] 
Since that and other important works (cf. for instance \cite{garofalo}), the possibility to extend classical results to this new framework has been extensively studied in many papers. For instance, many homogenization problems have been solved in the special setting of the Heisenberg group (cf. \cite{ddmm}) and in more general Carnot Groups (cf. \cite{BMT, MV, FT}). In the last years, in \cite{maionepinafsk,maionepinafsk2} the authors started the investigation of the $\Gamma-$convergence of translations-invariant local functionals $F:\,L^p(\Omega)\times\mA\to [0,\infty]$, 
where $1<p<+\infty$ and $\mA$ is the class of all open subsets of $\Om$, defined starting from  a family of Lipschitz vector fields satisfying the so-called \emph{linear independence condition (LIC)}, which will be recalled in the following section. Their strategy strongly relies on the possibility to represent, under suitable conditions, an abstract translations-invariant local functional as an integral functional. This tool, which was obtained as a generalization of the Euclidean result contained in \cite{buttazzodalmaso3}, allowed the authors to achieve a $\Gamma$-compactness property for suitable classes of translations-invariant integral functionals. Afterwards, inspired by \cite{maionepinafsk}, in \cite{essverzpin} the authors took into account the problem of representing in integral form a local functional for which the translations-invariant assumption is dropped, extending the integral representation results presented in \cite{buttazzodalmaso1, buttazzodalmaso2} to the non-Euclidean framework of Lipschitz vector fields. According to \cite{buttazzodalmaso1, buttazzodalmaso2}, the authors of \cite{essverzpin} took into account both the convex and the non-convex case. For dealing with the latter situation, they exploited some continuity condition introduced in \cite{buttazzodalmaso2} known as \emph{weak condition $(\omega)$} and \emph{strong condition $(\omega)$}.\\

The aim of the present paper is to generalize the $\Gamma$-compactness result presented in \cite{maionepinafsk} to different classes of integral functionals which are not assumed to be translations-invariant, and which can be defined both on $\lp{\Om}$ and on $\anso{\Om}$. To be more precise, we are going to show the three following results.
\begin{itemize}
    \item $\Gamma(L^p)$-compactness, under standard boundedness and coercivity requirements, for a class of non-negative convex integral functionals defined on $\lp{\Om}\times\mA$.
     \item $\Gamma(W_X^{1,p})$-compactness, under standard boundedness requirements, for a class of non-negative convex integral functionals defined on $\anso{\Om}\times\mA$.
     \item $\Gamma(W_X^{1,p})$-compactness, under standard boundedness requirements, for a class of non-negative and possibly non-convex integral functionals defined on $\anso{\Om}\times\mA$ which satisfies the \emph{strong condition $(\omega X)$} uniformly on the class.
\end{itemize}
Here the strong condition $(\omega X)$ is a suitable continuity condition which is strongly inspired by the classical notion introduced in \cite{buttazzodalmaso2}, and that will be properly defined in Section 4.
As one could expect, the lack of translations-invariance implies in general a dependence of the Lagrangian on the function variable.
Moreover we point out that in the last two cases no coercivity assumption is requested, and differently from the $L^p$ situation, we can allow also the case in which $p=1$.\\

Our general strategy is classical and consists of two main steps:
\begin{itemize}
    \item[\textbf{Step 1}] given a sequence $(F_h)_h$ in an appropriate class of integral functional $\mathcal{I}$, find a subsequence $(F_{h_k})_k$ and a local functional $F$ such that 
    \begin{equation*}
        F(\cdot,A)=\Gamma-\lim_{k\to\infty}F_{h_k}(\cdot,A)
    \end{equation*}
    for any $A\in\mA$, and moreover show that such an $F$ satisfies some structural properties;\\
        \item[\textbf{Step 2}] choose a suitable subclass $\mathcal{J}\subseteq\mathcal{I}$ and show that, whenever $(F_h)_h$ belongs to $\mathcal{J}$, then F belongs to $\mathcal{J}$.\\
\end{itemize}
Working in the $L^p$ framework the approach is quite standard. Indeed, for achieving Step 1 we exploits classical results of $\Gamma$-convergence in $L^p$, for which we refer to \cite{dalmaso}, and some properties of the $X$-gradient, whose definition will be given soon after this introduction. On the other hand, Step 2 is based on the new integral representation result for convex local functionals introduced in \cite[Theorem 2.3]{essverzpin}, and consists in verifying that the abstract $\Gamma$-limit $F$ satisfies the hypotheses relative to $\mathcal{J}$. \\

When instead we consider functionals defined on $\anso{\Om}$ and we perform the $\Gamma$-limit with respect to the strong topology of $\anso{\Om}$, the situation is more delicate. In particular, in order to achieve Step 1, we need to understand how to modify some arguments of \cite{dalmaso}. More precisely, we introduce a suitable notion of \emph{uniform fundamental estimate} which is inspired by the classical notion of fundamental estimate but which turns out to be more useful for our purposes. Indeed it allows us to drop the coercivity assumptions, and to mimicking the results that allowed the conclusion of Step 1 in the $L^p$ case, adapting them to this new framework.
Again, Step 2 relies on the possibility to exploit \cite[Theorem 2.3]{essverzpin} and the integral representation result for non-convex integral functionals proved in \cite[Theorem 4.4]{essverzpin} to represent the $\Gamma(W_X^{1,p})-$limit
in integral form. To this aim we show that the strong condition $(\omega X)$, which implies one of the requirement of the latter result, well behaves with respect to the passage to the $\Gamma$-limit, provided we perform this operation with respect to the strong topology of $\anso{\Om}$. \\
For the sake of completeness, we want to point out that some of the results achieved in the non-Euclidean framework were, to our knowledge, unsolved even in the classical Sobolev setting $W^{1,p}(\Omega)$.

To conclude, in the final section we list some remarks and problems that are still opens. In particular, we show some critical aspects and we prove some results with the hope that they may be useful to anyone who will try to handle this analysis.\\

This paper is organized as follows. In Section 2 we collect some preliminaries about Lipschitz vector fields and related functional spaces, local functionals and $\Gamma$-convergence. In Section 3 we prove the $\Gamma$-compactness result in the $L^p$ framework. In Section 4 we show two $\Gamma-$compactness results in the $W_X^{1,p}$ setting. In Section 5 we make some further remarks and we explain which problems are still open.\\

\textit{Acknowledgements}. We would like to thank M. Bonacini, G. Dal Maso, A. Defranceschi and A. Pinamonti for useful
suggestions and discussions on these topics.

\section{Preliminaries} 
\subsection{Notation}
We adopt the convention $\infty:=+\infty$. Unless otherwise specified, we let $1\leq p<\infty$ and $m,n\in\mathbb{N}\setminus\{0\}$ with $m\leq n$, we denote by $\Om$ an open and bounded subset of $\Rn$, by $\mA$ the family of all open subsets of $\Om$ and by $\mathcal{B}$ the family of all Borel subsets of $\Om$. Given two sets $A$ and $B$, we write $A\Subset B$ whenever $\overline{A}\subseteq B$. We set $\mA_0$ to be the subfamily of $\mA$ of all the open subsets $A$ of $\Om$ such that $A\Subset\Om$ and by $\mathcal{B}_0$ the subfamily of $\mathcal{B}$ of all the Borel subsets $B$ of $\Om$ such that $B\Subset \Om$. For any $v\in\Rn$, we denote by $|v|$ the Euclidean norm of $v$. We denote by $\mathcal{L}^n$ the restriction to $\Om$ of the $n$-th dimensional Lebesgue measure, and for any set $E\subseteq\Om$ we write $|E|:=\mathcal{L}^n(E)$.
Throughout this paper, we mean gradients as row vectors.
\subsection{Vector Fields}
In what follows we identify
a first-order differential operator  $\sum_{i=1}^n c_{i}\frac{\partial}{\partial x_i}$ with the map $(c_1(x),\ldots,c_n(x)):\Om\to \mathbb{R}^n$.
Given a family $X:=(X_1,\ldots,X_m)$ of Lipschitz vector fields on $\Omega$, we denote by $C(x)$ the $m\times n$ matrix defined as \[C(x):=[c_{j,i}(x)]_{\substack{{i=1,\dots,n}\\{j=1,\dots,
m}}}\]
Throughout this work we assume that $X$ satisfies the so-called \emph{linear independence condition (LIC)} on $\Om$, i.e. we assume that the set $$N_X:=\{x\in\Om\,:\,X_1(x),\ldots,X_m(x)\text{ are linearly dependent}\}$$ has Lebesgue measure zero. 
We point out that this condition is quite general and embraces many relevant families of vector fields studied in literature. In particular neither the {\it H\"ormander condition}, that is, each vector field $X_j$ is smooth and it holds that
$$\spann\{\lie(X_1(x),\ldots,X_m(x))\}=\Rn\qquad \text{ for any }x\in \Om,$$
nor the weaker assumption that $X$ induces a Carnot--Carath\'eodory metric in $\Omega$, is requested. For further accounts on this topic we refer to \cite{BLU}.
If $u\in\lul$, we define its distributional \emph{$X$-gradient} as 
\begin{equation*}
    \langle Xu,\varphi\rangle:=-\int_U u \diver (\varphi(x)\cdot C(x)){\rm d} x\qquad\text{ for any }\varphi\in C^\infty_c(\Om,\Rm).
\end{equation*}
In the particular case in which $X$ is the family of \emph{horizontal vector fields} of a Carnot Group, then the $X$-gradient reduces to the classical \emph{horizontal gradient} (cf. \cite{BLU}).
Moreover, we define the vector space
	$$\anso{\Om}:=\{u\in L^p(\Om)\,:\,Xu\in L^p(\Om)\},$$
and we refer to it as \emph{$X$-Sobolev space}, and to its elements as \emph{$X$-Sobolev functions}.
It is well known (cf. \cite{follandstein}) that the vector space $\anso{\Om}$, endowed with the norm
	$$\Vert u\Vert_{\anso{\Om}}:=\Vert u\Vert_{L^p(\Om)}+\Vert Xu\Vert_{L^p(\Om,\Rm)},$$
is a Banach space for any $p\in[1,+\infty]$, and that it is reflexive whenever $1<p<+\infty$.
The following simple proposition tells us that $X$-Sobolev spaces are actually a generalization of the classical ones, and that the $X$-gradient can be computed starting from the Euclidean gradient in a very simple way, whenever the function is regular enough.

\begin{prop}\label{503soinanso}
	The following facts hold:
	\begin{itemize}
		\item [(i)] if $n=m$ and $c_{j,i}(x)=\delta_{j,i}$ for every $i,j=1,\ldots, n$, then $\so{\Om}=\anso{\Om}$;
		\item[(ii)] $\so{\Om}\subseteq\anso{\Om}$, the inclusion is continuous and 
		\[Xu(x)=Du(x)\cdot C(x)^T\]
		for every $u\in\so{\Om}$ and a.e. $x\in\Omega$.
	\end{itemize}
\end{prop}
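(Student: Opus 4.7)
Part (i) should be immediate from the definitions. When $C(x)$ is the identity matrix, every test field $\varphi\in C^\infty_c(\Om,\Rn)$ satisfies $\varphi(x)\cdot C(x)=\varphi(x)$, so the distributional $X$-gradient reduces to the ordinary distributional gradient. Hence $Xu=Du$ componentwise as distributions, the two membership conditions coincide, and the two norms agree.

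For part (ii), the plan is to verify the pointwise identity $Xu=Du\cdot C^T$ directly at the level of distributions. I take $u\in\so{\Om}$ and $\varphi=(\varphi_1,\dots,\varphi_m)\in C^\infty_c(\Om,\Rm)$, and compute
\[
\langle Xu,\varphi\rangle=-\int_\Om u\,\diver(\varphi\cdot C)\,{\rm d}x=-\sum_{i,j}\int_\Om u\bigl(c_{j,i}\,\partial_i\varphi_j+\varphi_j\,\partial_i c_{j,i}\bigr)\,{\rm d}x,
\]
where $\partial_ic_{j,i}\in L^\infty(\Om)$ exists a.e.\ thanks to Rademacher's theorem applied to the Lipschitz coefficients $c_{j,i}$. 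The crucial step is to perform integration by parts on the first summand: since $c_{j,i}\in W^{1,\infty}(\Om)$ and $u\in\so{\Om}$, the product $u\,c_{j,i}$ belongs to $\so{\Om}$ and the Leibniz rule
\[
\partial_i(u\,c_{j,i})=(\partial_i u)\,c_{j,i}+u\,\partial_i c_{j,i}
\]
holds a.e., which allows me to rewrite $\int_\Om u\,c_{j,i}\,\partial_i\varphi_j\,{\rm d}x=-\int_\Om\bigl((\partial_iu)\,c_{j,i}+u\,\partial_ic_{j,i}\bigr)\varphi_j\,{\rm d}x$. Substituting back, the two contributions involving $\partial_i c_{j,i}$ cancel and I am left with
\[
\langle Xu,\varphi\rangle=\sum_{j=1}^m\int_\Om\Bigl(\sum_{i=1}^n c_{j,i}\,\partial_iu\Bigr)\varphi_j\,{\rm d}x=\int_\Om(Du\cdot C^T)\cdot\varphi\,{\rm d}x,
\]
which is precisely the claim $Xu=Du\cdot C^T$ a.e.\ in $\Om$.

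Once the identity is established, integrability and continuity of the inclusion follow at once. The matrix $C$ has Lipschitz entries on the bounded set $\Om$, hence is essentially bounded; writing $M:=\|C\|_{L^\infty(\Om)}$ I obtain $\|Xu\|_{L^p(\Om,\Rm)}\leq M\,\|Du\|_{L^p(\Om,\Rn)}$, so that $Xu\in\lp{\Om}$ and $u\in\anso{\Om}$, with
\[
\|u\|_{\anso{\Om}}=\|u\|_{L^p(\Om)}+\|Xu\|_{L^p(\Om,\Rm)}\leq(1+M)\,\|u\|_{\so{\Om}}.
\]
The only delicate point is the justification of the Leibniz rule above: although it is a well-known consequence of the fact that Lipschitz functions are multipliers on $\so{\Om}$, one can alternatively secure it by a standard mollification argument, approximating $u$ by $u_\eps\in C^\infty(\Om)\cap\so{\Om}$ on relatively compact subsets of $\Om$ containing $\operatorname{supp}\varphi$, applying the classical product rule to $u_\eps c_{j,i}$, and passing to the limit using the $L^p$-convergence of $u_\eps$ and $Du_\eps$ together with the $L^\infty$-bound on $c_{j,i}$ and $\partial_ic_{j,i}$.
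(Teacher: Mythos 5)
Your argument is correct. Note that the paper states this proposition without proof (it is treated as a standard fact, essentially contained in the literature it cites), so there is no in-text argument to compare against; your verification is a sound way to supply one. The only substantive step is the one you flag yourself: the Leibniz rule $\partial_i(u\,c_{j,i})=(\partial_i u)c_{j,i}+u\,\partial_i c_{j,i}$ for $u\in\so{\Om}$ and Lipschitz $c_{j,i}$, which justifies the integration by parts and the cancellation of the $\partial_i c_{j,i}$ terms; your mollification justification of it is standard and adequate, and the resulting identity $Xu=Du\cdot C^T$ together with $\|C\|_{L^\infty(\Om)}<\infty$ gives the continuity of the embedding as you state.
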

Let us notice that, being $\Om$ bounded, for any family $X$ of Lipschitz vector fields we have that
$$\winf{\Om}\subseteq\so{\Om}\subseteq\anso{\Om}.$$
Finally, similarly to the Euclidean case, a Meyers--Serrin approximation result holds even in this non-Euclidean framework
(cf. \cite[Theorem 1.2]{FSSC2}).
\begin{thm}\label{2402meyersserrinx}
	Let $\Om$ be an open subset of $\Rn$, and
	let $1\leq p<+\infty$. Then
	$$\overline{\anso{\Om}\cap C^\infty(\Om)}=\anso{\Om},$$ 
	where the closure is w.r.t. the metric topology of $(\anso{\Om},\normaansoom{\cdot})$.
\end{thm}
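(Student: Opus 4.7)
The plan is to follow the classical Meyers--Serrin strategy via a partition of unity combined with mollification, the technical novelty being a commutator estimate between each vector field $X_j$ and convolution with a standard mollifier.

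First, I would construct a locally finite open cover of $\Om$ by sets compactly contained in $\Om$: for instance, set $\Om_k:=\{x\in\Om:\mathrm{dist}(x,\partial\Om)>1/k\}\cap B(0,k)$, $A_k:=\Om_{k+1}\setminus\overline{\Om_{k-1}}$ (with $\Om_0=\Om_{-1}=\emptyset$), and pick a smooth partition of unity $\{\phi_k\}$ subordinate to $\{A_k\}$. Given $u\in\anso{\Om}$ and $\epsilon>0$, set $v_k:=\phi_k u$: a Leibniz-type identity, combined with Proposition \ref{503soinanso} applied to $\phi_k$, gives $Xv_k=\phi_k Xu+u X\phi_k\in L^p(\Om)$, and $v_k$ has compact support in $A_k$. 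The proof reduces to the local claim: for each $k$, $J_\eps v_k\to v_k$ in $\anso{\Om}$ as $\eps\to 0^+$, where $J_\eps$ denotes convolution with a standard mollifier. Granting this, choose $\eps_k>0$ small enough that $\mathrm{supp}(J_{\eps_k}v_k)\Subset\Om$ preserves local finiteness of the cover and such that $\|J_{\eps_k}v_k-v_k\|_{\anso{\Om}}<\epsilon/2^k$. Then $v:=\sum_k J_{\eps_k}v_k\in C^\infty(\Om)$ as a locally finite sum of smooth functions, and $\|v-u\|_{\anso{\Om}}\leq\sum_k\epsilon/2^k=\epsilon$.

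The main obstacle is proving the local claim, specifically that $X_j(J_\eps v_k)\to X_j v_k$ in $L^p$. Since $X_j=\sum_i c_{j,i}\partial_i$ has variable Lipschitz coefficients, mollification does not commute with $X_j$. A direct computation via integration by parts inside the convolution yields, for compactly supported $v\in L^p$, the identity
\begin{equation*}
X_j(J_\eps v)(x)-J_\eps(X_j v)(x)=\sum_i\int\partial_{x_i}\rho_\eps(x-y)\bigl[c_{j,i}(x)-c_{j,i}(y)\bigr]v(y)\,dy+\sum_i\int\rho_\eps(x-y)(\partial_{y_i}c_{j,i})(y)v(y)\,dy.
\end{equation*}
The second summand converges in $L^p$ to $\bigl(\sum_i\partial_ic_{j,i}\bigr)v$ by the standard mollification theorem, using $\partial_ic_{j,i}\in L^\infty$. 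For the first summand, the Lipschitz estimate $|c_{j,i}(x)-c_{j,i}(y)|\leq L|x-y|$ together with the pointwise bound $|x-y|\,|\partial_{x_i}\rho_\eps(x-y)|\leq C\eps^{-n}\chi_{|x-y|\leq\eps}$ gives a uniform $L^p$-bound on the associated integral operator; a Friedrichs-type argument (first verify convergence when $c_{j,i}\in C^1$ by Taylor expanding $\eps^{-1}[c_{j,i}(x)-c_{j,i}(x-\eps z)]\to \nabla c_{j,i}(x)\cdot z$ in the rescaled integral and using $\int z_k\partial_i\rho(z)\,dz=-\delta_{ki}$, then extend by density to the Lipschitz case) identifies its $L^p$-limit as $-\bigl(\sum_i\partial_ic_{j,i}\bigr)v$. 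The two contributions cancel, yielding $X_j(J_\eps v)-J_\eps(X_jv)\to 0$ in $L^p$.

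Combining this commutator estimate with the standard convergence $J_\eps(X_j v_k)\to X_j v_k$ in $L^p$ (valid since $X_j v_k\in L^p$) proves the local claim, and the partition-of-unity construction above concludes the proof. The whole procedure is intrinsically local, which is exactly why the partition of unity is needed: mollification is only performed on pieces with support compactly contained in $\Om$, where $J_\eps v_k$ is well defined for $\eps$ sufficiently small.
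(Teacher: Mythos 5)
The paper does not prove this theorem: it is quoted verbatim from the reference [FSSC2, Theorem 1.2] of Franchi--Serapioni--Serra Cassano, so there is no internal proof to compare against. Your argument is essentially the standard one behind that result (partition of unity plus a Friedrichs-type commutator lemma for mollification against the Lipschitz coefficients $c_{j,i}$), your commutator identity is the correct one for the paper's distributional definition of $Xu$ (note that $J_\eps(X_jv)$ must be computed by pairing $X_jv$ with $\rho_\eps(x-\cdot)$ through that definition, which is where the term $\sum_i\int\rho_\eps(x-y)(\partial_i c_{j,i})(y)v(y)\,dy$ comes from), and the cancellation of the two divergence terms is exactly what makes the lemma work. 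The one step that needs repair is the closing density argument: you propose to prove convergence of the first summand for $c_{j,i}\in C^1$ and then ``extend by density to the Lipschitz case,'' but the uniform operator bound you establish is $\|T_\eps v\|_{L^p}\le C\,\Lip(c_{j,i})\|v\|_{L^p}$, so passing from smooth to Lipschitz coefficients would require approximating $c_{j,i}$ in the Lipschitz seminorm, i.e. $\nabla c_{j,i}$ uniformly --- which is impossible for a general Lipschitz function. The density must instead be taken in $v$: fix the Lipschitz coefficients, prove the pointwise limit $\eps^{-1}\bigl[c_{j,i}(x)-c_{j,i}(x-\eps z)\bigr]\to\nabla c_{j,i}(x)\cdot z$ for a.e.\ $x$ via Rademacher's theorem and conclude by dominated convergence for $v\in C_c(\Om)$, then use the uniform bound in $v$ to extend to all $v\in L^p$ with compact support. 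With that modification, and with the (easily verified, non-circular) Leibniz rule $X(\phi_k u)=\phi_k Xu+uX\phi_k$ for smooth compactly supported $\phi_k$ obtained directly from the distributional definition rather than from Proposition \ref{708leibniz} (which in this paper is itself a consequence of the present theorem), your proof is complete and coincides with the one in the cited source.
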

We conclude this section with a Leibniz-type property for the $X$-gradient, which is a direct consequence of the previous result and which will be very useful in the sequel. 
\begin{prop}\label{708leibniz}
For any $u,v\in\anso{\Om}$, it holds that
\begin{equation*}
    X(uv)=(Xu)v+u(Xv)\qquad\text{a.e. on $\Om$.}
\end{equation*}
\end{prop}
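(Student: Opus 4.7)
The plan is to reduce to the smooth case via the Meyers--Serrin density result of Theorem \ref{2402meyersserrinx}, exploit the identification $Xu = Du \cdot C^T$ for smooth functions recorded in Proposition \ref{503soinanso}(ii) together with the classical Leibniz rule, and then pass to the limit in the distributional definition of the $X$-gradient.

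First, I would apply Theorem \ref{2402meyersserrinx} to select sequences $(u_k), (v_k) \subset C^\infty(\Om) \cap \anso{\Om}$ with $u_k \to u$ and $v_k \to v$ strongly in $\anso{\Om}$; in particular $u_k \to u$, $v_k \to v$ in $L^p(\Om)$ and $Xu_k \to Xu$, $Xv_k \to Xv$ in $L^p(\Om,\Rm)$. Since each product $u_k v_k$ is smooth, the ordinary Leibniz rule yields $D(u_k v_k) = (Du_k)v_k + u_k (Dv_k)$, and Proposition \ref{503soinanso}(ii) immediately translates this into the pointwise identity
$$X(u_k v_k) \,=\, D(u_k v_k)\cdot C(x)^T \,=\, (Xu_k)v_k + u_k(Xv_k) \qquad \text{a.e. on }\Om.$$

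Next I would take the limit $k\to\infty$ in the weak formulation. Testing against an arbitrary $\varphi \in C_c^\infty(\Om,\Rm)$, the smooth identity reads
$$-\int_\Om u_k v_k \,\diver(\varphi(x)\cdot C(x))\,dx \,=\, \int_\Om \bigl[(Xu_k)v_k + u_k(Xv_k)\bigr]\cdot\varphi \,dx.$$
Since $C$ is Lipschitz and $\varphi$ has compact support, $\diver(\varphi\cdot C) \in L^\infty(\Om)$ with compact support, so one can pass to the limit on both sides via H\"older's inequality and the strong $L^p$ convergences established in the first step. The limit of the left-hand side equals $\langle X(uv),\varphi\rangle$ by definition of the distributional $X$-gradient, while the right-hand side tends to $\int_\Om [(Xu)v + u(Xv)]\cdot\varphi\,dx$; the fundamental lemma of the calculus of variations then yields the claimed a.e. equality.

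The main delicate point concerns the integrability of the cross terms: when $u,v$ lie merely in $\anso{\Om}$ without any $L^\infty$ bound, the products $u_k v_k$ and $(Xu_k)v_k$ belong a priori only to $L^{p/2}$, so the passage to the limit must be carried out in a suitable integrability class. The clean way around this is either to truncate $u$ and $v$ to bounded functions (checking that truncation commutes with the $X$-gradient, which follows from the Euclidean counterpart via Proposition \ref{503soinanso}(ii)) or to read the statement in the natural case where at least one factor is bounded, which is precisely the situation arising in the subsequent applications. Once this functional framework is pinned down, every passage to the limit reduces to a routine H\"older estimate.
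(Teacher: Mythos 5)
Your overall strategy --- Meyers--Serrin density, the smooth Leibniz rule via Proposition \ref{503soinanso}(ii), and a passage to the limit in the distributional definition of the $X$-gradient --- is the same as the paper's, but the way you organize the approximation leaves a gap that you identify without actually closing. Approximating $u$ and $v$ \emph{simultaneously} forces you to pass to the limit in products of two sequences each converging only in $L^p$, so $u_kv_k\to uv$ and $(Xu_k)v_k\to(Xu)v$ are guaranteed by H\"older only in $L^{p/2}$; since the proposition is stated for all $1\leq p<\infty$ (and is invoked in Section 4, where $p=1$ is allowed), this is not ``a routine H\"older estimate'' once $p<2$, because $L^{p/2}(\Om)\not\subseteq L^1_{loc}(\Om)$ there. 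Your first proposed repair does not work as cited: a truncation rule for a general $u\in\anso{\Om}$ cannot be deduced ``from the Euclidean counterpart via Proposition \ref{503soinanso}(ii)'', since that proposition only identifies $Xu$ with $Du\cdot C^T$ for $u\in\so{\Om}$, and a general $X$-Sobolev function need not lie in $\so{\Om}$; proving $X(T_N\circ u)=T_N'(u)\,Xu$ in this setting would itself require an approximation argument of exactly the kind under discussion. Your second repair (assuming one factor bounded) proves a weaker statement than the one asserted.

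The paper sidesteps the bilinear difficulty by iterating the approximation one factor at a time and localizing. It first proves the identity for $u,v$ both smooth. Then, for $u\in\anso{\Om}$ general and $v$ smooth, it fixes $A'\Subset\Om$, where $\sup_{A'}|v|$ and $\sup_{A'}|Xv|$ are finite, approximates only $u$ by smooth $u_h\to u$ in $\anso{A'}$, and observes that $vu_h\to vu$ strongly in $\anso{A'}$ precisely because the fixed factor and its $X$-gradient are bounded on $A'$; the limit passage is then linear in the approximating sequence and genuinely routine. Exhausting $\Om$ by countably many such $A'$ and repeating the procedure with the roles of the two factors exchanged gives the general case (the paper's final iteration is stated tersely, but the crucial intermediate step with one locally bounded factor is exactly what your one-shot scheme is missing). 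To salvage your argument you would have to restrict to $p\geq 2$, or first establish the truncation lemma independently; otherwise, adopt the asymmetric, localized iteration.
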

\begin{proof}
Assume first that $u,v\in\anso{\Om}\cap C^\infty(\Om)$. Then it follows that
\begin{equation}\label{708leibniz1}
\begin{split}
    X(uv)&= D(uv)\cdot C^T=[(Du)v+u(Dv)]\cdot C^T\\
    &=Du\cdot C^Tv+uDv\cdot C^T=(Xu)v+u(Xv)
\end{split}
\end{equation}
everywhere on $\Om$.
Let now $A'\Subset\Om$, $u\in\anso{\Om}$ and $v\in\anso{\Om}\cap C^\infty(\Om)$. From Theorem \ref{2402meyersserrinx} we know in particular that there exists a sequence $(u_h)_h\subseteq\anso{\Om}\cap C^\infty(\Om)$ converging to $u$ in the strong topology of $\anso{A'}$, and clearly $v\in C^\infty(\overline{A'})$. It is easy to see that the sequence $(v u_h)_h$ belongs to $\anso{A'}\cap C^\infty(\overline{A'})$ and converges to $uv$ in the strong topology of $\anso{A'}$. This fact, together with \eqref{708leibniz1} and recalling that $\sup_{A'}|Xv|<+\infty$ since $\sup_{A'}|Dv|<+\infty$, yields that
\begin{equation*}
    \begin{split}
&\|X(uv)-(Xu)v-u(Xv)\|_{L^p(A',\Rm)}\\
&\leq\|X(uv)-X(u_hv) \|_{L^p(A',\Rm)}+\|(Xv)u_h+vX(u_h)-(Xu)v-u(Xv)\|_{L^p(A',\Rm)},\\
    \end{split}
\end{equation*}
and so, passing to the limit as $h\to\infty$, we conclude that
\begin{equation*}
    X(uv)=(Xu)v+u(Xv)\qquad\text{a.e. on $A'$.}
\end{equation*}
Since $\Om$ can be approximated by a countable family of open sets $A'\Subset\Om$, we conclude that
\begin{equation*}
    X(uv)=(Xu)v+u(Xv)\qquad\text{a.e. on $\Om$}
\end{equation*}
for any $u\in\anso{\Om}$ and $v\in\anso{\Om}\cap C^\infty(\Om)$.
Repeating once more the same procedure, the thesis follows.
\end{proof}

\subsection{Local Functionals} Before entering the core of this work, we fix some notation about local functionals, in order to ease the reading.
If we have a functional $F:\lp{\Om}\times\mA\scu[0,\infty]$ (resp. $F:\anso{\Om}\times\mA\scu[0,\infty]$), we say that $F$ is:
	\begin{itemize}
	\item [$(i)$] a \emph{measure} if $F(u,\cdot)$ is a measure for any $u\in\lp{\Om}$ (resp. $u\in\anso{\Om}$);
	\item [$(ii)$] \emph{local} if, for any $A\in\mA$ and $u,v\in\lp{\Om}$ (resp. $u,v\in\anso{\Om}$), then
	$$u|_{A}=v|_{A}\implies F(u,A)=F(v,A);$$
\item [$(iii)$] \emph{convex} on $\anso{\Om}$ if $F(\cdot,A)$ restricted to $\anso{\Om}$ is convex for any $A\in\mA$;
	\item [$(v)$] \emph{$L^p$-lower semicontinuous} (resp. \emph{$W_X^{1,p}$-lower semicontinuous}) if $F(\cdot,A)$ is  $L^p$-lower semicontinuous (resp. $W_X^{1,p}$-lower semicontinuous) for any $A\in\mA$;
	\item [$(vi)$]  \emph{weakly*- sequentially lower semicontinuous} if if $F(\cdot,A)$ restricted to ${\winf{\Om}}$ is seq. l.s.c. w.r.t. the weak*- topology of $\winf{\Om}$ for any $A\in\mA$.
	\end{itemize}

\subsection{Basic Notions of $\Gamma$-convergence}
 In this section we collect some basic definition and results about $\Gamma$-convergence, for which we refer to \cite{dalmaso}.
 We recall that, if $(X,\tau)$ is a first-countable topological space and $(F_h)_h$ is a sequence of functions defined on $(X,\tau)$ with values in $\overline{\Ru}$, we define the \emph{$\Gamma$-upper limit} and \emph{$\Gamma$-lower limit} respectively as
 \begin{equation*}
     \Gamma-\liminf_{h\to\infty}F_h(u):=\inf\left\{\liminf_{h\to\infty}F_h(u_h)\,:\,u_h\to u\right\}
 \end{equation*}
 and
 \begin{equation*}
     \Gamma-\limsup_{h\to\infty}F_h(u):=\inf\left\{\limsup_{h\to\infty}F_h(u_h)\,:\,u_h\to u\right\},
 \end{equation*}
 and we say that $(F_h)_h$ \emph{$\Gamma$-converges} to $F:(X,\tau)\scu\overline{\Ru}$ if it holds that
 \begin{equation*}
     \Gamma-\liminf_{h\to\infty}F_h(u)=\Gamma-\limsup_{h\to\infty}F_h(u)
 \end{equation*}
 for any $u\in X$. In this case we say that $F$ is the $\Gamma-$limit of $(F_h)_h$ and we write $F=\Gamma-\lim_{h\to\infty}F_h$.
 The next Proposition present some basic properties of $\Gamma$-limits which will be useful later on.

 \begin{prop} The following facts hold.
     \begin{itemize}
     \item[$\cdot$]
     For any $u\in X$ and for any sequence $(u_h)_h$ converging to $u$ in $X$, it holds that
         \begin{equation*}
    \Gamma-\liminf_{h\to\infty}F_h(u)\leq\liminf_{h\to\infty}F_h(u_h) \quad\mbox{and}\quad
    \Gamma-\limsup_{h\to\infty}F_h(u)\leq\limsup_{h\to\infty}F_h(u_h).
        \end{equation*}
    \item[$\cdot$] For any $u\in X$ there exist two sequences $(u_h)_h$ and $(v_h)_h$, converging to $u$ in $X$, which we call \emph{recovery sequences}, such that
         \begin{equation*}
        \Gamma-\liminf_{h\to\infty}F_h(u)=\liminf_{h\to\infty}F_h(u_h) \quad\mbox{and}\quad
        \Gamma-\limsup_{h\to\infty}F_h(u)=\limsup_{h\to\infty}F_h(v_h).
         \end{equation*}
         \end{itemize}
     \begin{itemize}
         \item[$\cdot$] For any $u\in X$ and for any sequence $(u_h)_h$ converging to $u$ in $X$, it holds that
         \begin{equation*}
             \Gamma-\lim_{h\to\infty}F_h(u)\leq\liminf_{h\to\infty}F_h(u_h);
         \end{equation*}
         \item[$\cdot$] For any $u\in X$ there exists a sequence $(u_h)_h$ converging to $u$ in $X$, which we call \emph{recovery sequence}, such that
         \begin{equation*}
             \Gamma-\lim_{h\to\infty}F_h(u)=\lim_{h\to\infty}F_h(u_h).
         \end{equation*}
     \end{itemize}
 \end{prop}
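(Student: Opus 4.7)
The plan is to follow the standard proof of these textbook facts (see \cite[Chapter 8]{dalmaso}), adapted to the first-countable setting so that sequential arguments are legitimate. The two blocks in the statement are essentially the same assertion written once for the upper and lower $\Gamma$-limits separately and then once for the $\Gamma$-limit itself, so I would treat the first block carefully and deduce the second as an immediate corollary.

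First I would prove the two inequalities for $\Gamma-\liminf_{h}F_h(u)$ and $\Gamma-\limsup_{h}F_h(u)$. Given any sequence $(u_h)_h$ in $X$ converging to $u$, the sequence $(u_h)_h$ is itself a competitor in the infimum defining $\Gamma-\liminf F_h(u)$, so
\begin{equation*}
\Gamma-\liminf_{h\to\infty}F_h(u)\leq\liminf_{h\to\infty}F_h(u_h),
\end{equation*}
and exactly the same argument with $\limsup$ in place of $\liminf$ gives the other inequality. These inequalities are purely formal and require no topological hypothesis.

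The main step, where first-countability matters, is the construction of the recovery sequences. For the $\Gamma-\liminf$ part I would fix $u\in X$, pick a countable neighborhood base $(U_k)_k$ of $u$ with $U_{k+1}\subseteq U_k$, and for each $k$ use the definition of infimum to select a sequence $(u_h^{(k)})_h$ converging to $u$ with
\begin{equation*}
\liminf_{h\to\infty}F_h(u_h^{(k)})\leq \Gamma-\liminf_{h\to\infty}F_h(u)+\tfrac{1}{k}
\end{equation*}
(with the obvious modification if the $\Gamma-\liminf$ is $-\infty$ or $+\infty$). A standard diagonal extraction, choosing indices $h_k\nearrow\infty$ with $u_{h_k}^{(k)}\in U_k$ and $F_{h_k}(u_{h_k}^{(k)})\leq \liminf_h F_h(u_h^{(k)})+\tfrac{1}{k}$, then produces a single sequence $(u_h)_h$ converging to $u$ and realizing $\Gamma-\liminf_h F_h(u)$ as $\liminf_h F_h(u_h)$. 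Combining this with the first inequality forces the $\liminf$ along this sequence to actually equal the $\Gamma-\liminf$. The analogous construction with $\limsup$ produces the recovery sequence $(v_h)_h$ for $\Gamma-\limsup$.

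Finally, the statements in the second block are immediate consequences of the first. Indeed, if $F_h$ $\Gamma$-converges to $F$, then by definition $\Gamma-\liminf_h F_h=\Gamma-\limsup_h F_h=\Gamma-\lim_h F_h$, so both the general inequality and the existence of a recovery sequence follow by specializing the first block; in particular the recovery sequence for the $\Gamma-\limsup$ automatically satisfies $\lim_h F_h(u_h)=\Gamma-\lim_h F_h(u)$ since $\limsup$ and $\liminf$ along it must coincide. The only real content is therefore the diagonal argument above, and the first-countability assumption on $(X,\tau)$ is the key ingredient that makes this sequential extraction work; this is the step I expect to require the most care to write cleanly, although none of it is technically deep.
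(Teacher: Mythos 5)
Your proposal is correct and follows exactly the standard route: the paper itself gives no proof of this Proposition, stating it as a collection of known facts with a pointer to Dal Maso's monograph, and your argument is the one found there (the inequalities are immediate from the definition since any converging sequence is a competitor in the infimum, and the recovery sequences come from a diagonal extraction over a countable neighborhood base, which is where first-countability enters). The only place where I would ask for more care is your phrase ``the analogous construction with $\limsup$'': for the $\Gamma$-$\liminf$ it suffices to select single indices $h_k$ and fill in the remaining positions with any sequence converging to $u$, because inserting extra terms can only decrease a $\liminf$; for the $\Gamma$-$\limsup$ this fails, and one must instead choose thresholds $N_k\nearrow\infty$ such that $F_h(v_h^{(k)})\leq \Gamma\text{-}\limsup_h F_h(u)+\tfrac{2}{k}$ and $v_h^{(k)}\in U_k$ for \emph{all} $h\geq N_k$, and define $v_h:=v_h^{(k)}$ on the whole block $N_k\leq h<N_{k+1}$, so that the $\limsup$ of the spliced sequence is genuinely controlled. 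With that block-wise modification spelled out, the proof is complete; the deduction of the second bullet block from the first is immediate, as you say.
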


Beside the notion of $\Gamma$-convergence there is a related one, which is more suitable to deal with sequences of local functionals, usually known as \emph{$\overline{\Gamma}$-convergence}. If we have a sequence of increasing functionals such that $F_h:X\times\mA\scu\overline{\Ru}$ for any $h\in\mathbb{N}$, and we define
\begin{equation*}
    F'(\cdot,A):=\Gamma-\liminf_{h\to\infty}F_h(\cdot,A)\quad\text{and}\quad F''(\cdot,A):=\Gamma-\limsup_{h\to\infty}F_h(\cdot,A)
\end{equation*}
for any $A\in\mA$, we say that $F_h$ \emph{$\bar\Gamma$-converges} to a functional $F:X\times\mA\scu\bar\Ru$ if it holds that
\begin{equation*}
    F(\cdot,A)=\sup\{F'(\cdot,A')\,:\,A'\in\mA,\,A'\Subset A\}=\sup\{F''(\cdot,A')\,:\,A'\in\mA,\,A'\Subset A\}.
\end{equation*}
In other words we say that $(F_h)_h$ $\bar\Gamma-$converges to $F$ whenever the \emph{inner regular envelopes} of $F'$ and $F''$ coincide and are equal to $F$. It is easy to check (cf. \cite[Remark 16.3]{dalmaso}) that any $\bar\Gamma-$limit is increasing, inner regular and lower semicontinuous. In the sequel, when we will deal with $\Gamma$-convergence with respect to the strong topology of $L^p$ or with respect to the strong topology of $W_X^{1,p}$, we will refer respectively to  $\Gamma(L^p)-$convergence or $\Gamma(W_X^{1,p})-$convergence.

The notions of $\Gamma-$convergence and $\bar\Gamma$-convergence, as one could expect, are strongly related. Indeed, assume for instance that a sequence of increasing functionals $F_h:L^p(\Om)\times\mA\to [0,\infty]$ is such that 
 \begin{equation}\label{gammalim}
        F(\cdot,A)=\Gamma(L^p)-\lim_{k\to\infty}F_h(\cdot,A)
    \end{equation}
 for any $A\in\mA$ and for a suitable measure functional $F:L^p(\Om)\times\mA\to [0,\infty]$.
 Then $F$ is $L^p$-lower semicontinuous, since it is a $\Gamma$-limit (cf. \cite[Proposition 6.8]{dalmaso}, and also increasing and inner regular, since it is a non-negative measure (cf. \cite[Theorem 14.23]{dalmaso}). Therefore, \cite[Proposition 16.4]{dalmaso} allows to conclude that \begin{equation}\label{gammabar}
     F=\bar\Gamma(L^p)-\lim_{h\to\infty}F_h.
 \end{equation}
 The converse implication is usually more delicate because, in general, the $\bar\Gamma(L^p)-$limit is not a measure.
 Indeed, 
 even if the $\bar\Gamma-$limit is always increasing, inner regular and, even if superadditivity behaves usually well, there are examples (cf. \cite[Example 16.13]{dalmaso}) in which $F$ fails to be subadditive.
 Therefore, when dealing with this issues, it is practise to work within milder classes of local functionals. To this aim, the so-called \emph{uniform fundamental estimates} are introduced. These estimates, although depending in their definition on the chosen topological space, are usually sufficient conditions for the subadditivity of the $\bar\Gamma-$limit. To give an instance, we introduce here the standard notion of uniform fundamental estimate (cf. \cite[Definition 18.2]{dalmaso}) for functional defined on $\lp{\Om}\times\mA$.
 We recall here that, as we will deal also with functionals defined on $\anso{\Om}\times\mA$, in Section 4 we will need to slightly modify the current notion to guarantee a better compatibility with the new functional setting.
 \begin{deff}\label{fundes}
 If we have a class $\mathcal{F}$ of non-negative local functionals defined on $L^p(\Om)\times\mA$,
we say that $\mathcal{F}$ satisfies the \emph{uniform fundamental estimate on $L^p(\Om)$} if, for any $\varepsilon>0$ and for
any $A',A'',B \in\mA$, with $A' \Subset A''$, there exists a constant $M >\, 0$ such that for any $u,\,v\in L^p(\Om)$ and for any $F\in\mathcal{F}$, there exists a smooth cut-off function $\varphi$ between $A''$ and $A'$, such that
\begin{equation}\label{classicalfundest}
    \begin{split}
F\Big(\varphi u&+(1-\varphi)v, A'\cup B\Big)\le(1+\varepsilon)\Big(F(u,A'')+F(v,B)\Big)+\\
&+\varepsilon\Big( \|u\|_{L^p(S)}^p+\|v\|_{L^p(S)}^p+1\Big)+ M \|{u-v}\|_{L^p(S)},
\end{split}
\end{equation}
where $S = (A''\setminus A')\cap B$.
 \end{deff}

The following result, which can be found in \cite[Theorem 18.7]{dalmaso}, tells us that \eqref{gammabar} is sufficient to guarantee \eqref{gammalim}, provided that our sequence satisfies the uniform fundamental estimate and that some reasonable boundedness properties hold.
\begin{thm}\label{equivalenza}
    Let $F_h:\lp{\Om}\times\mA\scu[0,\infty]$ be a sequence of functionals for which there exists a functional $F:\lp{\Om}\times\mA\scu[0,\infty]$ such that \eqref{gammabar} holds.
    Assume in addition that $(F_h)_h$ satisfies the uniform fundamental estimate and that there exist constants $e_1\geq 1$ and $e_2\geq 0$, a non-negative increasing functional $G:\lp{\Om}\times\mA\scu[0,+\infty]$ and a finite measure $\mu$ on $\Om$ such that
    \begin{equation}\label{bound}
        G(u,A)\leq F_h(u,A)\leq e_1G(u,A) +e_2\|u\|^p_{\lp{A}}+\mu(A)
    \end{equation}
    for any $u\in\lp{\Om}$, $A\in\mA$ and $h\in\mathbb{N}$. Then \eqref{gammalim} holds.
\end{thm}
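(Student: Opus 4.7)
The plan is to upgrade the $\bar\Gamma(L^p)$-convergence assumption \eqref{gammabar} to the pointwise $\Gamma(L^p)$-convergence \eqref{gammalim}. Writing $F'(\cdot,A) := \Gamma\text{-}\liminf_{h\to\infty} F_h(\cdot,A)$ and $F''(\cdot,A) := \Gamma\text{-}\limsup_{h\to\infty} F_h(\cdot,A)$, both functionals are increasing in $A$ (inherited from the monotonicity of each $F_h$), and the hypothesis that $F$ is their common inner regular envelope immediately gives the chain $F \leq F' \leq F''$ pointwise on $\lp{\Om}\times\mA$. Hence it is enough to prove the reverse inequality $F''(u,A) \leq F(u,A)$; equivalently, that $F''$ itself is inner regular in the set variable, since then $F''$ coincides with its own inner regular envelope, which by hypothesis equals $F$.

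To prove inner regularity of $F''$, I fix $u\in\lp{\Om}$, $A\in\mA$, $\eps>0$, and select nested open sets $A'_0 \Subset A' \Subset A'' \Subset A$. Setting $B := A\setminus\overline{A'_0}$, one has $A'\cup B = A$ and $S := (A''\setminus A')\cap B = A''\setminus A'$. Let $(u_h)_h$ and $(\tilde u_h)_h$ be recovery sequences in $\lp{\Om}$ for $F''(u,A'')$ and $F''(u,B)$ respectively. The uniform fundamental estimate provides cut-offs $\varphi_h$ between $A''$ and $A'$, and a constant $M>0$, such that $w_h := \varphi_h u_h + (1-\varphi_h)\tilde u_h \to u$ in $\lp{\Om}$ and
\[
F_h(w_h, A) \leq (1+\eps)\bigl(F_h(u_h, A'') + F_h(\tilde u_h, B)\bigr) + \eps\bigl(\|u_h\|_{\lp{S}}^p + \|\tilde u_h\|_{\lp{S}}^p + 1\bigr) + M\|u_h - \tilde u_h\|_{\lp{S}}.
\]
Passing to $\limsup_{h\to\infty}$ and using the strong $L^p$-convergence of both sequences to $u$ on $S$, I obtain
\[
F''(u, A) \leq (1+\eps)\bigl(F''(u, A'') + F''(u, B)\bigr) + \eps\bigl(2\|u\|_{\lp{S}}^p + 1\bigr),
\]
and since $A''\Subset A$ the first term on the right is already dominated by $F(u, A)$ thanks to the inner regular envelope hypothesis.

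The main obstacle is controlling $F''(u,B)$ as the collar $B$ shrinks towards $\partial A$. Testing the upper bound in \eqref{bound} along the constant sequence gives
\[
F''(u, B) \leq e_1 G(u, B) + e_2\|u\|_{\lp{B}}^p + \mu(B),
\]
and the last two summands vanish as $|B|\to 0$ by absolute continuity of $\mu$ and of the Lebesgue integral. The genuinely delicate term is $e_1 G(u,B)$, which a priori need not decay as $B\searrow\partial A$ because $G$ is only assumed monotone, not countably additive. To circumvent this, I would exhaust $A$ by a nested sequence $A'_{0,k}\nearrow A$, apply the construction above at each scale with a summable family of parameters $\eps_k$, and exploit the overall finiteness $F''(u,\Om) < \infty$ forced by \eqref{bound} (since $G(u,\Om)$, $\mu(\Om)$ and $\|u\|^p_{\lp{\Om}}$ are all finite). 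A diagonal extraction then produces a scale along which $G(u,B_k)\to 0$, at which point the previous display becomes $F''(u, A) \leq F(u, A) + O(\eps)$. Letting $\eps\to 0$ yields $F'' = F' = F$, which is \eqref{gammalim}. The delicate bookkeeping of the constants $M=M(A',A'',B,\eps)$ along the exhaustion, and the handling of the non-measure $G$, is where the proof requires the most care.
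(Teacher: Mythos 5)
The paper does not actually prove this statement: it is imported verbatim from Dal Maso's monograph (\cite[Theorem 18.7]{dalmaso}), so the only benchmark is that reference. Measured against it, your skeleton is the right one: the reduction of \eqref{gammalim} to the chain $F\leq F'\leq F''$ plus the inner regularity of $F''$, the collar decomposition $A=A'\cup B$ with $B=A\setminus\overline{A'_0}$ and $S=A''\setminus A'$, the application of the uniform fundamental estimate to the two recovery sequences, the convergence $w_h\to u$ in $L^p$, and the observation that $F''(u,A'')\leq F(u,A)$ for $A''\Subset A$ are all exactly the standard steps. You have also correctly located the only delicate point, namely the term $e_1G(u,B)$ on the shrinking collar.

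The gap is that your proposed treatment of that term does not work. For a functional $G$ that is merely increasing in the set variable, the numbers $G(u,B_k)$ along collars $B_k=A\setminus\overline{A'_{0,k}}$ with $A'_{0,k}\nearrow A$ form a decreasing, bounded sequence, but nothing in the hypotheses forces its limit to be $0$: the finiteness of $F''(u,\Om)$ only yields the useless bound $G(u,B_k)\leq G(u,\Om)$, and no choice of summable parameters $\eps_k$ or diagonal extraction can create decay that monotonicity does not provide. Indeed, taking $G(u,B):=1$ whenever $\overline{B}$ contains a fixed point $x_0\in\Om$ and $G(u,B):=0$ otherwise produces an increasing, local $G$ for which the constant sequence $F_h:=G$ satisfies \eqref{bound} (with $e_1=1$, $e_2=0$, $\mu=0$) and the uniform fundamental estimate, yet $G(u,B_k)\equiv 1$ for every $A$ with $x_0\in\partial A\cap\Om$; so monotonicity of $G$ alone cannot close this step. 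What actually kills the collar term in the cited proof, and in the only place the paper invokes the theorem (Proposition \ref{firststeplp}, where $G=c_0\Psi_p$ is an $L^p$-lower semicontinuous measure by Proposition \ref{708misura}), is the additivity of $G(u,\cdot)$ in the set variable: lower semicontinuity along recovery sequences gives $G(u,A')\leq F'(u,A')\leq F(u,A)<\infty$ for all $A'\Subset A$, hence $G(u,A)<\infty$ by inner regularity of the measure $G(u,\cdot)$, and then continuity from above of a finite measure yields $G(u,B_k)\downarrow G(u,\emptyset)=0$. Your proof needs to invoke this extra structure on $G$ explicitly; as written, the sentence ``a diagonal extraction then produces a scale along which $G(u,B_k)\to 0$'' is an unproved (and, from the stated hypotheses alone, unprovable) claim.
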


\section{$\Gamma$-compactness in $L^p$-Spaces}
In this section we prove a $\Gamma$-compactness result for a class of convex integral functionals defined on $L^p(\Om)$ with respect to the strong topology of $\lp{\Om}$. Our strategy is based on classical results of $\Gamma$-convergence in $L^p$ spaces and on the possibility to exploit the aforementioned integral representation result for convex local functionals presented in \cite[Theorem 2.3]{essverzpin}. 
First of all we introduce a large class of integral functionals for which some important properties, such as the uniform fundamental estimate introduced in Definition 
\ref{fundes}, are satisfied. Therefore we let $1<p<\infty$, and we fix $a\in\lu$ and constants $0<c_0\leq c_1$ and $c_2\geq 0$. We say that a functional $F:\lp{\Om}\times\mA\scu[0,\infty]$ belongs to $\claft$ if there exists a Carath\'eodory function
$f:\Om\times\Ru\times\Rm\scu[0,\infty]$ such that
\begin{equation}\label{coercbound}
    c_0|\eta|^p\leq f(x,u,\eta)\leq a(x)+c_1|\eta|^p+c_2|u|^p
\end{equation}
for any  $(u,\eta)\in\Ru\times\Rm$, for a.e. $x\in\Om$, and it holds that
\begin{equation}\label{708funzionaleduedef}
F(u,A)=
\displaystyle{\begin{cases}
\int_{A}f(x,u(x),Xu(x))\,{\rm d} x&\text{ if }A\in\mA,\,u\in W^{1,p}_X(A)\\
+\infty&\text{otherwise}
\end{cases}
\,.
}
\end{equation}
In particular, we say that $F\in\claf$ whenever $F\in\claft$ and it holds that 
\begin{equation}\label{convex}
    f(x,\cdot,\cdot) \text{ is convex for a.e. }x\in\Om.
\end{equation}

As announced, the main result of this section 
is the
$\Gamma$-compactness for the class of convex integral functionals.

\begin{thm}\label{709main1}
For any sequence $(F_h)_h\subseteq\claf$ there exists a subsequence $(F_{h_k})_k$ and a local functional $F\in\claf$ such that 
\begin{equation*}
    F(\cdot,A)=\Gamma(L^p)-\lim_{k\to+\infty}F_{h_k}(\cdot,A)\qquad\text{for any $A\in\mA$.}
\end{equation*}
\end{thm}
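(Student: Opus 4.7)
I would follow the two-step strategy outlined in the introduction: first I extract from $(F_h)_h$ a subsequence whose $\Gamma(L^p)$-limit exists and is a measure-valued local functional with enough structure, and then I apply the convex integral representation of \cite[Theorem 2.3]{essverzpin} to identify it as an element of $\claf$. For Step 1, I would first verify that the entire class $\claf$ satisfies the uniform fundamental estimate of Definition \ref{fundes}. Given $A'\Subset A''$ in $\mA$ and $B\in\mA$, I would pick a smooth cut-off $\varphi$ between $A''$ and $A'$ and estimate $F(\varphi u+(1-\varphi)v,A'\cup B)$ by splitting the integration domain into $A'$, $S:=(A''\setminus A')\cap B$, and $B\setminus A''$, using the upper bound in \eqref{coercbound} together with the Leibniz rule of Proposition \ref{708leibniz} to expand $X(\varphi u+(1-\varphi)v)=\varphi\,Xu+(1-\varphi)\,Xv+(u-v)\,X\varphi$. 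Outside $S$ the contributions reproduce $F(u,A'')+F(v,B)$ up to a factor $(1+\varepsilon)$, while the $(u-v)X\varphi$ term and the $c_2|\cdot|^p$ growth on $S$ produce the $M\|u-v\|_{L^p(S)}$ and $\varepsilon$-terms required in \eqref{classicalfundest}. Condition \eqref{bound} then holds with $G(u,A):=c_0\|Xu\|_{L^p(A,\Rm)}^p$ when $u\in\anso{A}$ (and $+\infty$ otherwise), $e_1:=c_1/c_0$, $e_2:=c_2$, and $\mu(A):=\int_A a\,dx$. Applying the standard $\bar\Gamma$-compactness theorem in $L^p$ (cf. \cite[Theorem 16.9]{dalmaso}) then produces a subsequence $(F_{h_k})_k$ that $\bar\Gamma(L^p)$-converges to an increasing, inner regular, lower semicontinuous $F$, and Theorem \ref{equivalenza} upgrades this to $\Gamma(L^p)$-convergence on every $A\in\mA$.

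\textbf{Step 2.} To apply \cite[Theorem 2.3]{essverzpin} I would verify that $F$ is local, $L^p$-lower semicontinuous, convex in the first argument, and a measure in the second. Locality is inherited from the $F_h$ via a recovery-sequence argument; $L^p$-lower semicontinuity is automatic for a $\Gamma$-limit (cf. \cite[Proposition 6.8]{dalmaso}); convexity of $F(\cdot,A)$ passes to the $\Gamma$-limit by concatenating the recovery sequences at the two endpoints; the measure property follows by combining the inner regularity and superadditivity intrinsic to $\bar\Gamma$-limits with the subadditivity produced by the uniform fundamental estimate together with the bounds \eqref{bound}, along the lines of \cite[Chapter 18]{dalmaso}. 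Once these properties are in place, \cite[Theorem 2.3]{essverzpin} produces a Carath\'eodory integrand $f$, convex in $(s,\eta)$, such that $F$ has the form \eqref{708funzionaleduedef}. The pointwise bounds \eqref{coercbound} on $f$ are then obtained by testing the integral identity on affine perturbations with prescribed $X$-gradient at a chosen Lebesgue point $x_0$ -- which exist by Proposition \ref{503soinanso} and the full-rank property of $C(x_0)$ guaranteed by (LIC) -- and applying Lebesgue differentiation on shrinking balls about $x_0$ to the global inequalities $c_0\|Xu\|^p\leq F(u,A)\leq\int_A a + c_1\|Xu\|^p + c_2\|u\|^p$.

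\textbf{Main obstacle.} The step I expect to require the most care is the verification of the measure property of $F$: although the uniform fundamental estimate gives approximate subadditivity, upgrading this together with the inner regularity and superadditivity of $\bar\Gamma$-limits into genuine countable additivity in the second argument requires a careful application of the De Giorgi--Letta criterion combined with the bounds \eqref{bound}. A closely related subtlety is the need to ensure that the affine perturbations used to extract the pointwise bounds on $f$ actually lie in $\anso{\Om}$ and simultaneously realize an arbitrary pair $(s,\eta)\in\Ru\times\Rm$ at the chosen Lebesgue point; here (LIC) is precisely what guarantees that $C(x)$ has rank $m$ almost everywhere, and so that such perturbations always exist.
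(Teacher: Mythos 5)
Your overall architecture coincides with the paper's: Step 1 is a $\bar\Gamma(L^p)$-compactness argument upgraded to $\Gamma(L^p)$-convergence via Theorem \ref{equivalenza}, and Step 2 invokes the convex integral representation of \cite[Theorem 2.3]{essverzpin} after checking that the limit is a local, lower semicontinuous, convex measure satisfying \eqref{boundlp}. The one step in your sketch that, as written, would fail is the direct verification of the uniform fundamental estimate with a single cut-off $\varphi$ chosen in advance. On $S=(A''\setminus A')\cap B$ the expansion of $X(\varphi u+(1-\varphi)v)$ produces, besides the $(u-v)X\varphi$ term, the term $c_1|\varphi Xu+(1-\varphi)Xv|^p\le 2^{p-1}c_1(|Xu|^p+|Xv|^p)$, while the right-hand side of \eqref{classicalfundest} only offers $\varepsilon\bigl(\|u\|_{L^p(S)}^p+\|v\|_{L^p(S)}^p+1\bigr)+M\|u-v\|_{L^p(S)}$ plus the extra $\varepsilon\bigl(F(u,A'')+F(v,B)\bigr)$ coming from the factor $(1+\varepsilon)$: absorbing a contribution carrying the fixed constant $2^{p-1}c_1$ into $\varepsilon c_0\int_S(|Xu|^p+|Xv|^p)$ is impossible once $\varepsilon$ is small. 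One needs the standard slicing trick --- a finite chain of nested open sets $A'\Subset A_1\Subset\cdots\Subset A_{k+1}\Subset A''$ with cut-offs $\varphi_i$, choosing for each pair $(u,v)$ the index that minimizes the transition-layer contribution, so that averaging over the $k$ layers produces the factor $1/k\le\varepsilon$. The paper bypasses writing this out by verifying that $\claft$ sits inside the abstract class $\clam$ (conditions \eqref{708ufe1}--\eqref{708ufe2}, with $G=c_0\Psi_p$ and $\mu(B)=\int_B|a|\,{\rm d}x$) and then citing \cite[Theorems 19.4 and 19.5]{dalmaso}, which package exactly this averaging argument together with the $\bar\Gamma$-compactness and the measure property of the limit.

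Two smaller points. First, in Step 2 you propose to re-derive the pointwise bounds \eqref{coercbound} on $f$ by Lebesgue differentiation at points where $C(x)$ has full rank; the paper instead observes (Theorem \ref{2411modiofmythm}) that the representation theorem of \cite{essverzpin} already returns an integrand satisfying \eqref{coercbound} once the functional satisfies \eqref{boundlp}, so your extra work is correct but unnecessary. Second, membership in $\claf$ also requires $F(u,A)=+\infty$ for every $u\in\lp{\Om}\setminus\anso{A}$; this is not delivered by the representation theorem, which only speaks about $u\in\anso{A}$, and must be checked separately, as the paper does at the end of its proof via the coercivity $c_0\Psi_p(u,A)\le F(u,A)$ and the $L^p$-lower semicontinuity of $\Psi_p$ from Proposition \ref{708misura}. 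Your sketch contains the ingredients for this but omits the step.
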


In order to prove the latter we first describe some properties of $\Gamma(L^p)$-limits within the class $\claft$. To this aim, we recall the following result, which can be found in \cite[Lemma 4.15]{maionepinafsk}.

\begin{prop}\label{708misura}
Let us define the functional $\Psi_p:\lp{\Om}\times\mA\scu[0,\infty]$ as
\begin{equation*}\label{708psidef}
\Psi_p(u,A):=
\displaystyle{\begin{cases}
\|Xu\|^p_{\lp{A}}&\text{ if }A\in\mA,\,u\in W^{1,p}_X(A)\\
+\infty&\text{ otherwise}
\end{cases}
\,.
}
\end{equation*}
Then $\Psi_p$ is a a $L^p$-lower semicontinuous measure.
\end{prop}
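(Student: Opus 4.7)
The proposition has two parts: $\Psi_p(u,\cdot)$ is a Borel measure for each $u$, and $\Psi_p(\cdot,A)$ is $L^p$-lower semicontinuous for each $A$. My plan is to treat them separately.

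For the lower semicontinuity part, I would fix $A\in\mA$, $u\in\lp{\Om}$, and a sequence $u_h\to u$ in $\lp{\Om}$. If $\liminf_h\Psi_p(u_h,A)=+\infty$ the conclusion is immediate; otherwise I would extract a subsequence realising a finite limit. Then $u_h\in\anso{A}$ eventually and $(Xu_h)_h$ is bounded in $\lp{A,\Rm}$. Reflexivity (we are in the regime $1<p<\infty$) gives a further subsequence along which $Xu_h\rightharpoonup v$ weakly in $\lp{A,\Rm}$. Testing against $\varphi\in C^\infty_c(A,\Rm)$ in the defining identity of the distributional $X$-gradient and passing to the limit, using also $u_h\to u$ in $\lp{A}$ and $\diver(\varphi\cdot C)\in L^{p'}(A)$, one obtains
\[
\int_A v\cdot\varphi\,dx=-\int_A u\,\diver(\varphi\cdot C)\,dx,
\]
so $v=Xu$ as distributions on $A$. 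Hence $u\in\anso{A}$ and weak lower semicontinuity of the $\lp{A,\Rm}$-norm yields $\Psi_p(u,A)\leq\liminf_h\Psi_p(u_h,A)$.

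For the measure property, I would fix $u\in\lp{\Om}$ and apply the De Giorgi--Letta criterion (\cite[Theorem 14.23]{dalmaso}), which asserts that an increasing set function on $\mA$ extends to a Borel measure provided it is inner regular, superadditive on disjoint open sets and subadditive. The essential ingredient is a sheaf-type property for $X$-Sobolev functions: if $u$ lies in $\anso{A_1}\cap\anso{A_2}$, then by uniqueness of distributional derivatives the two local $X$-gradients agree on $A_1\cap A_2$ and glue to give $u\in\anso{A_1\cup A_2}$, with an $X$-gradient that restricts correctly. This follows directly from the distributional definition of $Xu$. Equipped with this, wherever $u$ is locally in $\anso{\cdot}$ the functional $\Psi_p(u,\cdot)$ coincides with the absolutely continuous Borel measure $E\mapsto\int_E|Xu|^p\,dx$, and all four properties required by De Giorgi--Letta become immediate; the cases with $\Psi_p(u,A)=+\infty$ are handled via monotonicity and the sheaf property.

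I expect the most subtle point to be inner regularity when $\Psi_p(u,A)=+\infty$: one must rule out the scenario where $u\in\anso{A'}$ for every $A'\Subset A$ with uniformly bounded $X$-gradient norm. But in that case an exhausting sequence $(A_k)_k$ together with the sheaf property yields local gradients that glue to an $L^p_{loc}(A)$ function with, by monotone convergence, uniformly bounded $\lp{A}$-norm, thus producing a representative in $\lp{A,\Rm}$ and contradicting $u\notin\anso{A}$. Once this technical point is clarified, the proposition follows from the pieces above.
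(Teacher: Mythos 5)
Your argument is correct. Note, however, that the paper does not prove this proposition at all: it is quoted verbatim from \cite[Lemma 4.15]{maionepinafsk}, so there is no internal proof to compare against. What you wrote is a sound self-contained reconstruction along the standard lines of that reference: for the lower semicontinuity, weak compactness of $(Xu_h)_h$ in $L^p(A,\Rm)$ plus the identification of the weak limit with $Xu$ through the distributional definition (the divergence $\diver(\varphi\cdot C)$ is indeed in $L^\infty$ with compact support, since $C$ is Lipschitz, so the pairing passes to the limit); for the measure property, the De Giorgi--Letta criterion \cite[Theorem 14.23]{dalmaso} combined with the gluing property of distributional $X$-gradients, with the only delicate point being inner regularity when $\Psi_p(u,A)=+\infty$, which you correctly resolve by gluing the local gradients along an exhaustion and using that any test function is supported in some $A_k$. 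Two small points worth making explicit: your use of reflexivity requires $1<p<\infty$, which is consistent with the paper (this is the standing assumption of Section 3, and the paper remarks in Section 4 that precisely this proposition is what obstructs the case $p=1$); and when verifying that the glued gradient is the distributional $X$-gradient on $A_1\cup A_2$ in the sheaf step, one should split a test function with a partition of unity subordinate to $\{A_1,A_2\}$ -- a routine step, but it is the place where the distributional definition is actually used.
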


\begin{prop}\label{firststeplp}
For any sequence $(F_h)_h\subseteq\claft$ there exists a subsequence $(F_{h_k})_k$ and a functional $F:\lp{\Om}\times\mA\scu[0,\infty]$ such that 
\begin{itemize}
    \item $F$ is a measure
    \item $F$ is local
    \item $F$ is $L^p$-lower semicontinuous
    \item For any $u\in\anso{\Om}$ and $A\in\mA$ it holds that
    \begin{equation}\label{boundlp}
        \int_Ac_0|Xu(x)|^p {\rm d} x\leq F(u,A)\leq\int_Aa(x)+c_1|Xu(x)|^p +c_2|u(x)|^p
        {\rm d} x
    \end{equation}
\end{itemize}
and moreover it holds that
\begin{equation}\label{gammalimlp}
    F(\cdot,A)=\Gamma(L^p)-\lim_{k\to+\infty}F_{h_k}(\cdot,A)
\end{equation}
for any $A\in\mA$.
\end{prop}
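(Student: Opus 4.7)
The plan is to follow the classical Dal Maso strategy for $\Gamma$-compactness of measure functionals, adapted to the $X$-gradient framework: first extract a $\bar\Gamma(L^p)$-convergent subsequence using an abstract compactness theorem, then upgrade $\bar\Gamma$-convergence to $\Gamma$-convergence on every $A\in\mA$ via Theorem \ref{equivalenza}. Concretely, since each $F_h\in\claft$ is an integral functional on open sets, it is trivially increasing, inner regular, and a measure, so the abstract $\bar\Gamma$-compactness result \cite[Theorem 19.5]{dalmaso} applied to the space $L^p(\Om)$ yields a subsequence $(F_{h_k})_k$ and a functional $F:L^p(\Om)\times\mA\to[0,\infty]$ such that $F=\bar\Gamma(L^p)-\lim_k F_{h_k}$; being a $\bar\Gamma$-limit, $F$ is automatically increasing, inner regular, and lower semicontinuous.

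The main technical step is to verify the uniform fundamental estimate of Definition \ref{fundes} for the entire class $\claft$. Given $A'\Subset A''$, one fixes an integer $N$ (to be chosen large depending on $\varepsilon$), selects nested open sets $A'=A_0\Subset A_1\Subset\dots\Subset A_N=A''$, and picks smooth cut-offs $\varphi_i$ with $\varphi_i\equiv 1$ on $A_{i-1}$, $\mathrm{supp}\,\varphi_i\Subset A_i$. For each $i$ one computes $F(\varphi_i u+(1-\varphi_i)v,A'\cup B)$ by splitting the domain into $A'$, $B\setminus A''$, and $S_i:=(A_i\setminus A_{i-1})\cap B$. On $S_i$ the Leibniz rule (Proposition \ref{708leibniz}) gives
\[
X(\varphi_i u+(1-\varphi_i)v)=\varphi_i Xu+(1-\varphi_i)Xv+(u-v)X\varphi_i,
\]
and the upper bound in \eqref{coercbound} combined with convexity of $t\mapsto |t|^p$ produces
\[
f(x,w_i,Xw_i)\leq a(x)+c_1(1+\varepsilon)\bigl(\varphi_i|Xu|^p+(1-\varphi_i)|Xv|^p\bigr)+c_2\bigl(\varphi_i|u|^p+(1-\varphi_i)|v|^p\bigr)+C_\varepsilon\|X\varphi_i\|_\infty^p|u-v|^p.
\]
A pigeonhole argument on the index $i\in\{1,\dots,N\}$ selects the best cut-off, replacing $c_1(1+\varepsilon)\int_S |Xu|^p$ by $\tfrac{1}{N}\int_{A''}|Xu|^p\leq \tfrac{1}{c_0 N}F_h(u,A'')$ and similarly for $v$; taking $N$ large absorbs the multiplicative constants into $(1+\varepsilon)$, producing the required estimate with $M=M(\varepsilon,N)$.

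Having established the uniform fundamental estimate, \cite[Proposition 18.4]{dalmaso} (subadditivity of the $\bar\Gamma$-limit under this estimate) implies that $F$ is a measure in the set variable. Then Theorem \ref{equivalenza} applies: the bounds in \eqref{coercbound} yield \eqref{bound} with $G(u,A)=c_0\Psi_p(u,A)$ (a measure by Proposition \ref{708misura}), $e_1=c_1/c_0$, $e_2=c_2$, and $\mu(A)=\int_A a\,dx$, so we conclude that $F(\cdot,A)=\Gamma(L^p)-\lim_k F_{h_k}(\cdot,A)$ for every $A\in\mA$, proving \eqref{gammalimlp}. The $L^p$-lower semicontinuity of $F(\cdot,A)$ is automatic from being a $\Gamma(L^p)$-limit (\cite[Proposition 6.8]{dalmaso}), and locality passes to the $\Gamma$-limit since it passes to the $F_{h_k}$: if $u|_A=v|_A$ then any sequence converging to $u$ in $L^p$ can be modified on $\Om\setminus A$ to converge to $v$ without altering $F_{h_k}(\cdot,A)$.

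Finally, the bounds \eqref{boundlp} follow from \eqref{coercbound} by passing to the $\Gamma$-limit: the upper bound is obtained by using the constant recovery sequence $u_k\equiv u$, giving $F(u,A)\leq\limsup_k F_{h_k}(u,A)\leq\int_A(a+c_1|Xu|^p+c_2|u|^p)\,dx$; the lower bound uses $F_{h_k}(v,A)\geq c_0\Psi_p(v,A)$ together with the $L^p$-lower semicontinuity of $\Psi_p$ from Proposition \ref{708misura}, so that for a recovery sequence $u_k\to u$ realizing $F(u,A)=\lim_k F_{h_k}(u_k,A)$,
\[
F(u,A)=\lim_k F_{h_k}(u_k,A)\geq c_0\liminf_k \Psi_p(u_k,A)\geq c_0\int_A|Xu|^p\,dx.
\]
The main obstacle I expect is the detailed verification of the uniform fundamental estimate: the pigeonhole refinement requires the Leibniz identity in the non-Euclidean setting and careful bookkeeping of the constants, and it is here that the non-Euclidean framework makes the computation slightly heavier than in the classical setting of \cite{dalmaso}.
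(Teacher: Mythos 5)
Your proposal is correct and follows essentially the same route as the paper: extract a $\bar\Gamma(L^p)$-convergent subsequence via Dal Maso's compactness theorem, establish the uniform fundamental estimate through the Leibniz rule for the $X$-gradient and convexity of $|\cdot|^p$ (using the coercivity $c_0|\eta|^p\leq f$ to absorb the gradient terms), upgrade to $\Gamma(L^p)$-convergence on every $A\in\mA$ via Theorem \ref{equivalenza}, and obtain the bounds \eqref{boundlp} from constant and recovery sequences together with the lower semicontinuity of $\Psi_p$. The only presentational difference is that you unpack the pigeonhole proof of the fundamental estimate directly, whereas the paper verifies membership of $\claft$ in the auxiliary class $\clam$ and then invokes \cite[Theorem 19.4]{dalmaso} to the same effect.
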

\begin{proof}
The proof 
is based on general results of \cite{dalmaso}, indeed, 
according to \cite[Theorem 19.4]{dalmaso}, we introduce a suitable superclass of $\claft$. To this aim, we say that a functional $F:\lp{\Om}\times\mA\scu[0,\infty]$ belongs to $\clam$ if $F$ is a measure and if
there exist $d_1\geq 1$, $d_2,d_3,d_4\geq 0$, a finite measure $\mu$, independent of $F$, and a measure $G:\lp{\Om}\times\mA\scu[0,\infty]$, which may depend on $F$, such that
 \begin{equation}\label{708ufe1}
    G(u,A)\leq F(u,A)\leq d_1 G(u,A)+d_2\|u\|_{\lp{A}}+\mu(A)
\end{equation}
and
\begin{equation}\label{708ufe2}
    G(\varphi u+(1-\varphi)v,A)\leq d_4(G(u,A)+G(v,A))+d_3d_4(\max|D\varphi|^p)\|u-v\|_{L^p(A)}+\mu(A),
\end{equation}
for any $u,v\in\lp{\Om}$, $A\in\mA$ and $\varphi\in C^\infty_c(\Om)$ such that $0\leq\varphi\leq 1$.
We are going to show that $\claft\subseteq\clam$. For this purpose, let us define $\mu:\mathbb{B}\scu[0,+\infty]$ as 
$$\mu(B):=\int_B|a(x)| \, {\rm d} x.$$
Then $\mu$ is a finite measure on $\Om$. Moreover, thanks to Proposition \ref{708misura}, the non-negative local functional $G:\lp{\Om}\times\mA\scu[0,+\infty]$ defined as
$$G(u,A):=c_0\Psi_p(u,A)\qquad\text{for any $u\in\lp{\Om},\,A\in\mA$}$$
is a measure. Let us show \eqref{708ufe1}. Let us set $d_1:=\frac{c_1}{c_0}$ and $d_2:=c_2$. If $A\in\mA$ and $u\notin\anso{A}$, the estimate is trivial, while if $u\in\anso{A}$, it follows from the definition of $\cla$. So we are left to show \eqref{708ufe2}. Fix then $A\in\mA$. If either $u\notin\anso{A}$ or $v\notin\anso{A}$ the estimate is trivial. Hence assume that $u,v\in\anso{A}$ and take $\varphi\in C^\infty_c(\Om)$ such that $0\leq\varphi\leq 1$. Then, recalling Proposition \ref{503soinanso}, Proposition \ref{708leibniz}, the fact that $\eta\mapsto|\eta|^p$ is convex on $\Rm$, and setting $$C:=\max\{\|c_{j,i}\|_{\infty}\,:\,j=1,\ldots,m,\,i=1,\ldots,n\}$$
it follows that $0<C<\infty$ and
\begin{equation*}
    \begin{split}
    G(\varphi u+(1-\varphi)v,A)&=c_0\int_A \lvert X\varphi(u-v)+\varphi Xu+(1-\varphi)Xv \rvert^p \, {\rm d} x\\
    &= c_02^p\int_A\left|\frac{X\varphi(u-v)}{2}+\frac{\varphi Xu+(1-\varphi)Xv}{2}\right|^p \, {\rm d} x\\
    &\leq c_02^{p-1}\int_A  \lvert X\varphi(u-v) \rvert^p \, {\rm d} x +c_02^{p-1}\int_A|\varphi Xu+(1-\varphi)Xv|^p \, {\rm d} x\\
    &\leq c_02^{p-1}\int_A |X\varphi(u-v)|^p \, {\rm d} x +2^{p-1}(G(u,A)+G(v,A))\\
    &\leq c_02^{p-1}(C\sqrt{m})^p(\max|Du|^p) \|u-v\|_{L^p(A)}+2^{p-1}(G(u,A)+G(v,A)).\\
    \end{split}
\end{equation*}
Thus \eqref{708ufe2} follows. Hence $\claft\subseteq\clam$. Therefore, thanks to \cite[Theorem 19.5]{dalmaso}, there exist a subsequence of $(F_h)_h$, still denoted by $(F_h)_h$, and a $L^p$-lower semicontinuous functional $F\in\clam$ such that $(F_h)_h$ $\overline{\Gamma}(L^p)$-converges to $F$. In particular $F$ is a measure. By \cite[Proposition 16.4]{dalmaso} and \cite[Proposition 16.15]{dalmaso}, $F$ is also local. Furthermore, by Proposition \ref{708misura} $G$ is a $L^p$-lower semicontinuous measure and, since $(F_h)_h$ satisfies the uniform fundamental estimate on $L^p(\Om)$ according to \cite[Theorem 19.4]{dalmaso}, we can apply Theorem \ref{equivalenza} to conclude that \eqref{gammalimlp} holds. Finally, we show that $F$ satisfies \eqref{boundlp}.
Let us fix $A\in\mA$ and $u\in\anso{\Om}$, and a sequence $(u_h)_h$ such that 
\begin{equation}\label{709approxseq2}
    F(u,A)=\lim_{h\to+\infty}F_h(u_h,A).
\end{equation}
Arguing as above we can assume that $(u_h)_h \subset \anso{A}$. Therefore, thanks to \eqref{709approxseq2} and Proposition \ref{708misura}, it follows that
\begin{equation}\label{709stimabasso}
    c_0\int_A \lvert Xu \rvert^p \,{\rm d} x \leq\liminf_{h\to+\infty}\int_A \lvert Xu_h \rvert^p
    \,{\rm d} x
    \leq\liminf_{h\to+\infty}F_h(u_h,A)=F(u,A),
\end{equation}
and so the first inequality follows. 
Finally we have that
\begin{equation*}
\begin{split}
    F(u,A)&\leq\liminf_{h\to+\infty}F_h(u,A)\leq\liminf_{h\to+\infty}\int_A a(x)+c_1
    \lvert Xu \rvert^p+c_2 \lvert u \rvert^p \,{\rm d} x\\
    &=\int_A a(x)+c_1 \lvert Xu \rvert^p+c_2 \lvert u \rvert^p
    \,{\rm d} x.
\end{split}
\end{equation*}

This proves the thesis. 
\end{proof}

In order to represent the $\Gamma$-limit achieved in the previous proposition in an integral form, we wish to exploit \cite[Theorem 2.3]{essverzpin}. Following a remark that the authors of \cite{essverzpin} made in their introduction, we present here a slight variant which is more suitable to our purposes.

\begin{thm}\label{2411modiofmythm}
 Let $F:\lp{\Om}\times\mA\scu[0,\infty]$ be such that:
	\begin{itemize}
		\item [(i)]$F$ is a measure;
		\item [(ii)]$F$ is local;
		\item [(iii)]$F$ is convex on $\anso{\Om}$;
		\item [(iv)]$F$ satisfies \eqref{boundlp}.
	\end{itemize}
	Then there exists a Carath\'eodory function $f:\Om\times\Ru\times\Rm\to[0,\infty]$ which satisfies \eqref{coercbound} and \eqref{convex}, and such that
\begin{equation}\label{repr}
    F(u,A)=\int_{A}f(x,u(x),Xu(x)){\rm d} x
\end{equation}
for any $A\in\mA$ and for any $u\in\anso{A}$.
\end{thm}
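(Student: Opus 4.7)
The plan is to reduce to \cite[Theorem 2.3]{essverzpin}, which is the integral representation theorem for convex local functionals cited throughout the paper, and then upgrade the integral growth assumption (iv) to the pointwise bounds \eqref{coercbound} on the Carath\'eodory integrand. Hypotheses (i), (ii), (iii) are already formulated as in the original statement. The only genuine modification is that (iv) is stated as an integral estimate on $F$, whereas in \cite[Theorem 2.3]{essverzpin} the analogous assumption is formulated pointwise on a ``density'' of $F$; so the substantive step is to verify that (i)--(iv) imply the hypotheses of that theorem, and conversely that its conclusion yields \eqref{coercbound}.

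First I would observe that the bound (iv), combined with the fact that $F(u,\cdot)$ is a measure, gives the kind of differentiation-type control needed by \cite[Theorem 2.3]{essverzpin}: for each fixed $u\in\anso{\Om}$, both the $L^p$-norm and the $L^p$-norm of $Xu$ induce finite Borel measures on $\Om$, and $F(u,\cdot)$ is dominated by their sum plus the finite measure $a(x)\,{\rm d} x$. Together with the convexity (iii), locality (ii) and the measure property (i), the integral representation theorem of \cite{essverzpin} applies and produces a Carath\'eodory function $f:\Om\times\Ru\times\Rm\to[0,\infty]$, convex in $(u,\eta)$, such that \eqref{repr} holds for every $A\in\mA$ and every $u\in\anso{A}$.

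Next I would derive the pointwise bounds \eqref{coercbound} from the integral bounds (iv) by a Lebesgue differentiation argument. Fix $(u_0,\eta_0)\in\Ru\times\Rm$ and a point $x_0\in\Om\setminus N_X$; by the linear independence condition the matrix $C(x_0)$ has rank $m$, so there exists $\xi\in\Rn$ with $\xi\cdot C(x_0)^T=\eta_0$. Define the affine test function $v(x):=u_0+\xi\cdot(x-x_0)$, which belongs to $\winf{B_r(x_0)}\subseteq\anso{B_r(x_0)}$ for every small $r>0$ and satisfies $v(x_0)=u_0$, $Xv(x_0)=\eta_0\cdot C(x_0)^T$ (up to a Lipschitz perturbation of order $r$ coming from the Lipschitz dependence of the $c_{j,i}$). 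Applying \eqref{repr} and \eqref{boundlp} to $v$ on $B_r(x_0)$, dividing by $|B_r(x_0)|$ and letting $r\to 0$, the Lebesgue differentiation theorem together with the continuity of $f$ in $(u,\eta)$ yields both $c_0|\eta_0|^p\leq f(x_0,u_0,\eta_0)$ and $f(x_0,u_0,\eta_0)\leq a(x_0)+c_1|\eta_0|^p+c_2|u_0|^p$ at a.e. $x_0\in\Om$, for every $(u_0,\eta_0)$ in a countable dense set; continuity in $(u,\eta)$ then extends the bounds to all $(u_0,\eta_0)$.

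The main obstacle will be managing the null set of exceptional $x_0$ uniformly in $(u_0,\eta_0)$: the Carath\'eodory representative given by \cite[Theorem 2.3]{essverzpin} is a priori defined only up to a $\mathcal{L}^n$-negligible set depending on each argument $(u_0,\eta_0)$, so we need to fix a countable dense subset of $\Ru\times\Rm$, discard the (still null) union of exceptional sets, and exploit continuity of $f(x,\cdot,\cdot)$ on each fiber to pass to arbitrary $(u_0,\eta_0)$. The Lipschitz error in $Xv(x_0)-\eta_0$ produced by the non-constancy of $C(\cdot)$ is likewise harmless, since it is controlled by $r$ and disappears in the limit thanks to the continuity of $f$ in $\eta$.
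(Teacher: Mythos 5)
Your overall strategy --- reduce to \cite[Theorem 2.3]{essverzpin} and then reconcile the integral bound (iv) with the pointwise bounds \eqref{coercbound} --- is the same as the paper's, but you have located the work in the wrong place. The paper explicitly dismisses the equivalence between the bound on $F$ and the bound on the Lagrangian as something that is ``clear from the proofs in \cite{essverzpin}'' and takes it for granted; your Lebesgue differentiation argument with affine test functions $v(x)=u_0+\xi\cdot(x-x_0)$ is a reasonable way to actually carry this out, modulo the usual bookkeeping: you should parametrize the countable dense family by $(u_0,\xi)\in\Qu\times\Qn$ rather than by $(u_0,\eta_0)$, since the slope $\xi$ realizing $\xi\cdot C(x_0)^T=\eta_0$ depends on $x_0$, and you need an integrable, $x$-uniform modulus of continuity for $f(x,\cdot,\cdot)$ on bounded sets (which does follow from the upper bound, as in Proposition \ref{effeisstrong}) to pass the average of $f(x,v,Xv)$ to $f(x_0,u_0,\eta_0)$. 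Also note the slip $Xv(x_0)=\eta_0\cdot C(x_0)^T$, which should read $Xv(x_0)=\xi\cdot C(x_0)^T=\eta_0$.

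The genuine gap is elsewhere: the conclusion asks for the representation \eqref{repr} for every $u\in\lp{\Om}\cap\anso{A}$, i.e.\ for functions that are $X$-Sobolev only on $A$, whereas \cite[Theorem 2.3]{essverzpin} yields it only for $u\in\anso{\Om}$. You simply assert that the cited theorem ``produces $f$ such that \eqref{repr} holds for every $A\in\mA$ and every $u\in\anso{A}$'', but this extension is precisely the content of the ``slight variant'' being proved and does not come for free. The paper fills it with a short but necessary argument: for $A'\Subset A$ take a smooth cut-off $\varphi$ between $A'$ and $A$ and set $v:=\varphi u$; then $v\in\anso{\Om}$ and $v|_{A'}=u|_{A'}$, so locality (ii) gives $F(u,A')=F(v,A')=\int_{A'}f(x,u,Xu)\,{\rm d}x$, and inner regularity of the measure $F(u,\cdot)$ (hypothesis (i)) then yields \eqref{repr} on all of $A$. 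Without this step your proof does not establish the stated conclusion.
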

\begin{proof}
We point out that in \cite{essverzpin} the authors did not take into account the possible equivalence between the bound from below of the Lagrangian and the bound from below of the functional, as the latter is actually not necessary to represent an abstract convex local functional in integral form. On the other hand, it is clear from the proofs in \cite{essverzpin} that such an equivalence is trivial, and so, in the current paper, we take it for granted.
From \cite{essverzpin} we know that there exists a a Carath\'eodory function $f:\Om\times\Ru\times\Rm\to[0,\infty]$ which satisfies \eqref{coercbound} and \eqref{convex}, and such that
\begin{equation*}
    F(u,A)=\int_Af(x,u(x),Xu(x)){\rm d} x
\end{equation*}
for any $A\in\mA$ and for any $u\in\anso{\Om}$. Fix now $A\in\mA$, $A'\in\mA_0$ with $A'\Subset A$ and $ u\in\lp{\Om}\cap\anso{A}$, and let $v:=\varphi u$, where $\varphi$ is a smooth cut-off function between $A'$ and $A$. Then clearly $v\in\anso{\Om}$ and $v|_{A'}=u$. As $F$ is local, it follows that 
\begin{equation*}
    F(u,A')=F(v,A')=\int_{A'}f(x,v(x),Xv(x)){\rm d} x=\int_{A'}f(x,u(x),Xu(x)){\rm d} x.
\end{equation*}
Since $F$ is a measure, it is in particular inner regular, and so we conclude that \eqref{repr} holds.
\end{proof}

We are now ready to prove Theorem \ref{709main1}.
\begin{proof}[Proof of Theorem \ref{709main1}]
As $F\in\claf$, thanks to Proposition \ref{firststeplp} there exists a functional $F:\lp{\Om}\times\mA\scu[0,+\infty]$ which is a measure, local, satisfies \eqref{boundlp} and such that \eqref{gammalimlp} holds. Let us show that $F$ is convex on $\anso{\Om}$. 
Fix then $A\in\mA$ and take $t\in(0,1)$ and $u,v\in\anso{\Om}$. Let $(u_h)_h$ and $(v_h)_h$ be two sequences converging respectively to $u$ and $v$ in $L^p(\Om)$ and such that 
\begin{equation}\label{709approxseq}
    F(u,A)=\lim_{h\to+\infty}F_h(u_h,A),\qquad F(v,A)=\lim_{h\to+\infty}F_h(v_h,A).
\end{equation}
Since $F(u,A)$ and $F(v,A)$ are finite we can assume that the 
sequences $(u_h)_h,(v_h)_h$ belong to $\anso{A}$. Therefore, since each $F_h(\cdot,A)$ is convex on $\anso{A}$, recalling \eqref{709approxseq} and the fact that $(tu_h+(1-t)v_h)_h$ converges to $tu+(1-t)v$ in $L^p(\Om)$, it follows that
\begin{equation*}
    \begin{split}
        F(tu+(1-t)v,A)&\leq\liminf_{h\to+\infty}F_h(tu_h+(1-t)v_h,A)\\
        &\leq\liminf_{h\to+\infty}(tF_h(u_h,A)+(1-t)F_h(v_h,A))\\
        &=t\lim_{h\to+\infty}F_h(u_h,A)+(1-t)\lim_{h\to+\infty}F_h(v_h,A)\\
        &=tF(u,A)+(1-t)F(v,A).
    \end{split}
\end{equation*}
Therefore we are in position to apply Theorem \ref{2411modiofmythm}. Finally, we notice that if $A\in\mA$ and $u\in\lp{\Om}\setminus\anso{A}$, arguing as in \eqref{709stimabasso} we conclude that $+\infty=c_0\Psi_p(u,A)\leq F(u,A)$, which implies that
\begin{equation*}
    \{u\in\lp{\Om}\,:\,F(u,A)<+\infty\}=\anso{A},
\end{equation*}
and so the thesis follows.
\end{proof}

\section{$\Gamma$-compactness in $W_X^{1,p}$}
In this section we show two $\Gamma$-compactness results for suitable classes of integral functionals defined on $\anso{\Om}$ and with respect to the strong topology of $\anso{\Om}$. Working in this new framework has surely some advantages. For instance we do not have to assume any coercivity assumptions on the sequence of Lagrangians, and we can allow the case $p=1$, since, among the other things, Proposition \ref{708misura} is not needed anymore. Therefore, throughout this section, we let $1\leq p<+\infty$ and, as in the previous one, we fix $a\in\lu, \ c_1,c_2\geq 0$. We say that a functional $F:\anso{\Om}\times\mA\scu[0,\infty]$ belongs to $\clagt$ if there exists a Carath\'eodory function
$f:\Om\times\Ru\times\Rm\scu[0,\infty]$ such that
\begin{equation}\label{onlybounded}
    f(x,u,\eta)\leq a(x)+c_1|\eta|^p+c_2|u|^p
\end{equation}
for any  $(u,\eta)\in\Ru\times\Rm$ and for a.e. $x\in\Om$, and it holds that
\begin{equation*}\label{708funzionaleduedef}
F(u,A)=\int_{A}f(x,u(x),Xu(x))dx
\end{equation*}
for any $A\in\mA$ and any $u\in\anso{\Om}$. 
Similarly to the previous section, we will show that this large class of functionals satisfies many nice properties, among which a suitable notion of uniform fundamental estimate that will be introduced below. However, this class is too general to hope to achieve $\Gamma$-compactness. Therefore we define two sub-classes which will be shown to be $\Gamma$-compact. For the first case we consider the sub-class of the convex functionals belonging to $\clagt$, i.e. we say that $F\in\clag$ whenever $F\in\clagt$ and
$$f(x,\cdot,\cdot) \text{ is convex for a.e. }x\in\Om.$$
In the second case we want to drop the convexity assumption. To this aim, we introduce a notion of strong condition which is strongly inspired by the classical one introduced in \cite{buttazzodalmaso2}.
\begin{deff}\label{nuovastrongdef}
We say that $\omega=(\omega_s)_{s\geq 0}$ is a \emph{family of locally integrable moduli of continuity} if $\omega_s:\Om\times[0,+\infty)\scu[0,+\infty)$ and 
	\begin{equation}\label{incruno}
		r\mapsto\omega(x,r)\text{ is increasing, continuous and }\omega(x,0)=0 
	\end{equation}
	for a.e. $x\in\Om$ and for any $s\geq0$,
	\begin{equation}\label{incrdue}
		s\mapsto\omega_s(x,r)\text{ is increasing and continuous} 
	\end{equation}
	for a.e. $x\in\Om$ and for any $r\geq 0$, and
	\begin{equation*}\label{2711modulus2}
		x\mapsto\omega_s(x,r)\in L^1_{loc}(\Om) \quad \text{for any } r,s\geq 0.
	\end{equation*}
Moreover we say that a functional $F:\anso{\Om}\times\mA\scu[0,\infty]$ satisfies the \emph{strong condition $(\omega X)$ with respect to $\omega$} if there exists a family $\omega=(\omega_s)_{s\geq 0}$ of locally integrable moduli of continuity such that
		\begin{equation}\label{2811strongprop}
			|F(v,A')-F(u,A')|\leq\int_{A'}\omega_s(x,r)\, {\rm d} x
		\end{equation}
		for any $s\geq 0$, $A'\in\mA_0$, $r\geq 0$, $u,v\in \anso{\Om}$ such that 
		\begin{align*}
		&|u(x)|,|v(x)|,|Xu(x)|,|Xv(x)|\leq s\\
		&|u(x)-v(x)|,|Xu(x)-Xv(x)|\leq r
		\end{align*}
		for a.e. $x\in A'$.
	\end{deff}	
This new notion seems to be more flexible and to fit better with our non-Euclidean setting, and allows to deal with more general classes of functions. On the other hand, it is quite easy to see that our condition is stronger than the one introduced in \cite{buttazzodalmaso2}, and so all the integral representation results proved in \cite{buttazzodalmaso2,essverzpin} remain valid.
Moreover, we point out that our family of moduli of continuity, unlike in \cite{buttazzodalmaso2}, is indexed over a continuous set, and the assumption on the behaviour of $s\mapsto \omega_s(x,r)$ is completely new. Nevertheless we will see in a while that, at least when dealing with integral functionals, this new requirement is quite natural.
Indeed the following fact holds.
\begin{prop}\label{effeisstrong}
Let $F\in\clagt$.
Then $F$ satisfies the strong condition $(\omega X)$.
\end{prop}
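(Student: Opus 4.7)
The plan is to construct the family $\omega=(\omega_s)_{s\ge 0}$ as the pointwise modulus of continuity of the Lagrangian $f$ on compact cylinders. More precisely, writing $K_s:=[-s,s]\times\overline{B}_s(0)\subseteq\Ru\times\Rm$, I would set
\[
\omega_s(x,r):=\sup\bigl\{|f(x,u_1,\eta_1)-f(x,u_2,\eta_2)|\,:\,(u_i,\eta_i)\in K_s,\ |u_1-u_2|\le r,\ |\eta_1-\eta_2|\le r\bigr\},
\]
where the supremum is actually taken over a fixed countable dense subset of the (compact) constraint set in order to ensure that $x\mapsto\omega_s(x,r)$ is measurable. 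Since $f$ is Carath\'eodory, $f(x,\cdot,\cdot)$ is continuous, hence uniformly continuous on each $K_s$, and this reduction to a countable sup is legitimate.

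Next I would check the properties listed in Definition \ref{nuovastrongdef}. Monotonicity of $r\mapsto\omega_s(x,r)$ and of $s\mapsto\omega_s(x,r)$, as well as the identity $\omega_s(x,0)=0$, are immediate from the definition. Continuity in $r$ follows from uniform continuity of $f(x,\cdot,\cdot)$ on $K_s$ combined with a standard extraction argument: a maximising sequence $(u_1^n,\eta_1^n,u_2^n,\eta_2^n)$ for a parameter $r_n\to r$ admits a convergent subsequence by compactness of $K_s\times K_s$, whose limit satisfies the constraint with parameter $r$; boundary cases (when $|u_1-u_2|=r$) are handled by perturbing the extremisers slightly towards each other inside $K_s$. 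The same type of extraction gives continuity in $s$. Finally, for the $L^1_{loc}$ requirement, I use the bound \eqref{onlybounded} together with $f\ge 0$ to get, for any $(u_i,\eta_i)\in K_s$,
\[
|f(x,u_1,\eta_1)-f(x,u_2,\eta_2)|\le f(x,u_1,\eta_1)+f(x,u_2,\eta_2)\le 2\bigl(a(x)+(c_1+c_2)s^p\bigr),
\]
so $\omega_s(\cdot,r)\in L^1(\Om)$, which is stronger than what is required, since $a\in\lu$ and $\Om$ is bounded.

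The strong condition \eqref{2811strongprop} is then essentially by construction. Given $s,r\ge 0$, $A'\in\mA_0$ and $u,v\in\anso{\Om}$ with $|u(x)|,|v(x)|,|Xu(x)|,|Xv(x)|\le s$ and $|u(x)-v(x)|,|Xu(x)-Xv(x)|\le r$ for a.e.\ $x\in A'$, the quadruple $(u(x),v(x),Xu(x),Xv(x))$ is admissible in the definition of $\omega_s(x,r)$, so
\[
|f(x,v(x),Xv(x))-f(x,u(x),Xu(x))|\le\omega_s(x,r)\qquad\text{for a.e.\ }x\in A'.
\]
Integrating on $A'$ and using the integral representation of $F$ yields $|F(v,A')-F(u,A')|\le\int_{A'}\omega_s(x,r)\,{\rm d}x$, as required.

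The main technical point is the joint regularity of $(s,r)\mapsto\omega_s(x,r)$ together with measurability in $x$; once the countable-sup trick secures measurability, the continuity properties follow from soft compactness arguments based on continuity of $f(x,\cdot,\cdot)$. Everything else reduces to the growth bound \eqref{onlybounded} and direct substitution.
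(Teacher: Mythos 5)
Your proposal is correct and follows essentially the same route as the paper: the paper likewise defines $\omega_s(x,r)$ as the supremum of $|f(x,u,\xi)-f(x,v,\eta)|$ over the constraint set $E^s_r=\{(u,v,\xi,\eta):|u|,|v|,|\xi|,|\eta|\le s,\ |u-v|,|\xi-\eta|\le r\}$ (on the full-measure set where $f(x,\cdot,\cdot)$ is continuous), secures measurability by reducing to a countable supremum, obtains integrability from the growth bound \eqref{onlybounded}, gets monotonicity in $r$ and $s$ from the nesting of the constraint sets, and concludes by pointwise substitution and integration. Your treatment of the continuity of $r\mapsto\omega_s(x,r)$ and $s\mapsto\omega_s(x,r)$ via compactness of the constraint set is in fact slightly more detailed than the paper's, which simply invokes the continuity of $f(x,\cdot,\cdot)$.
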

\begin{proof}
	This proof is based on the proof of \cite[Lemma 2.5]{buttazzodalmaso2}. Since $f$ is Carath\'eodory, then the set $\Om':=\{x\in\Om\,:\,(u,\xi)\mapsto f(x,u,\xi)\text{ is continuous}\}$ satisfies $|\Om'|=|\Om|$.
	For any $s,r\geq 0$, set $E^s_r\subseteq\Ru\times\Ru\times\Rm\times\Rm$ as $$E^s_r:=\{(u,v,\xi,\eta)\,:\,|u|,|v|,|\xi|,|\eta|\leq s, |u-v|,|\xi-\eta|\leq r\}$$
	and the function
	\begin{equation*}\label{2911modulusdef}
		\omega_s(x,r):=
		\begin{cases}\,\sup\{|f(x,u,\xi)-f(x,v,\eta)|\,:\,(u,v,\xi,\eta)\in E^s_r\}&\text{ if }x\in\Om',\\
			0&\text{ otherwise.}
		\end{cases}
	\end{equation*}
	We show that $(\omega_s)_{s\geq 0}$ is a family of locally integrable moduli of continuity. Let us fix then $s,r\geq 0$: since $(u,\xi)\mapsto f(x,u,\xi)$ is continuous for almost every $x\in\Om$, then the supremum in the definition of $\omega_s$ can be taken over a countable subset of $E^k_\epsilon$. Since for any $(u,v,\xi,\eta)$ the function $x\mapsto|f(x,u,\xi)-f(x,v,\eta)|$ is measurable, then $\omega_s(\cdot,r)$ is 
	measurable. Moreover, thanks to \eqref{onlybounded}, it follows that, for any $(u,v,\xi,\eta)\in E^k_\epsilon$,
	\begin{equation*}
		\begin{split}
			|f(x,u,\xi)-f(x,v,\eta)|&\leq2|a(x)|+c_1|\xi|^p+c_1|\eta|^p+c_2|u|^p+c_2|v|^p\\
			&\leq2|a(x)|+4s(c_1+c_2).
		\end{split}	
	\end{equation*}
	Since the right side does not depend on $(u,v,\xi,\eta)\in E^s_r$, we conclude that
	$$\omega_s(x,r)\leq2|a(x)|+4s(c_1+c_2).$$
	Hence $\omega_k(\cdot,\epsilon)\in L^1_{loc}(\Om)$. Fix now $x\in\Om'$ and $s\geq 0$.
	Since $E^s_r\subseteq E^s_t$ for any $r\leq t$, then $\omega_s(x,\cdot)$ is increasing, $\omega_k(x,0)=0$
	and the continuity follows from the continuity of $f(\cdot,u,\xi)$.
	Finally, taking $x\in\Om'$ and $r\geq 0$ we have again that $E^s_r\subseteq E^t_r$ for any $r\leq t$, hence $s\mapsto\omega_s(x,r)$ is increasing. Once more, from the continuity of $f(\cdot,u,\xi)$ we conclude that $s\mapsto\omega_s(x,r)$ is continuous. Then $(\omega_s)_s$ is a family of locally integrable moduli of continuity. 
	It is straightforward to check that $F$ satisfies the strong condition $(\omega X)$ with respect to $(\omega_s)_{s\geq 0}$.
\end{proof}
On the other hand, if $(F_h)_h\subseteq\clagt$, even if each $F_h$ satisfies the strong condition $(\omega X)$, in general the family of moduli of continuity strongly depends on $h$. Therefore we introduce suitable subclasses of $\clagt$ which present uniformity in the choice of the family of moduli of continuity. Hence, if a family $\omega=(\omega_s)_{s\geq 0}$ is fixed, we say that a functional $F:\anso{\Om}\times\mA\scu[0,\infty]$ belongs to $\clafto$ if $F\in\clagt$ and it satisfies the strong condition $(\omega X)$ with respect to $\omega$.

\begin{rem}
Let $(F_h)_h\subseteq\clagt$ be such that there exists $K\in\lul$ such that
\begin{equation}\label{lipschitz}
    |f_h(x,u,\xi)-f_h(x,v,\eta)|\leq |K(x)|(|u-v|+|\xi-\eta|)
\end{equation}
for any $u,v\in\Ru$, $\xi,\eta\in\Rm$ and $h\in\mathbb{N}$. If for any $s,r\geq 0$ we define $E^s_r$ as in Proposition \ref{effeisstrong}
	and
	\begin{equation*}\label{2911modulusdef}
		\Tilde{\omega}_s(x,r):=|K(x)|\sup\{(|u-v|+|\xi-\eta|)\,:\,(u,v,\xi,\eta)\in E^s_r\}
		,
	\end{equation*}
	then it is easy to see that $(F_h)_h$ belongs to $\claftot$.
\end{rem}
We are ready now to state the two main results of this section.
\begin{thm}\label{mainsobolev2}
For any sequence $(F_h)_h\subseteq\clag$ there exists a subsequence $(F_{h_k})_k$ and a functional $F\in\clag$ such that 
\begin{equation*}
    F(\cdot,A)=\Gamma(W_X^{1,p})-\lim_{k\to+\infty}F_{h_k}(\cdot,A)\qquad\text{for any $A\in\mA$.}
\end{equation*}
\end{thm}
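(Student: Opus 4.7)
The plan is to mimic the two-step strategy of Theorem \ref{709main1}, with the natural modifications dictated by working in $\anso{\Om}$ rather than in $\lp{\Om}$. In Step 1 I would extract a subsequence $(F_{h_k})_k$ $\bar\Gamma(W_X^{1,p})$-converging to a local functional $F$ which is a measure, $W_X^{1,p}$-lower semicontinuous, and satisfies the upper bound in \eqref{onlybounded}. In Step 2 I would verify that $F$ is convex on $\anso{\Om}$ and apply a variant of \cite[Theorem 2.3]{essverzpin}, analogous to Theorem \ref{2411modiofmythm} but formulated for functionals defined on $\anso{\Om}\times\mA$ which only satisfy the upper bound \eqref{onlybounded}, to realize $F$ as an integral functional of class $\clag$.

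For Step 1, I would first introduce a super-class $\mathcal{M}^X_p$ of $\clag$, the $\anso{\Om}$-analogue of the class $\clam$ used in Proposition \ref{firststeplp}: a functional $F:\anso{\Om}\times\mA\to[0,\infty]$ belongs to $\mathcal{M}^X_p$ if it is a measure and if there exist constants $d_1\geq 1$, $d_2,d_3,d_4\geq 0$, a finite measure $\mu$ on $\Om$ and a measure $G$ on $\anso{\Om}\times\mA$ such that an analogue of \eqref{708ufe1}--\eqref{708ufe2} holds, where the $L^p$ norms on the right hand side of \eqref{708ufe2} are replaced by the appropriate $W_X^{1,p}$ quantities dictated by the new uniform fundamental estimate introduced in Section 4. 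The inclusion $\clag\subseteq\mathcal{M}^X_p$ is obtained by choosing $\mu(B):=\int_B|a|\,{\rm d}x$, $G(u,A):=c_1\Psi_p(u,A)+c_2\int_A|u|^p$, and verifying \eqref{708ufe2} using the convexity of $\eta\mapsto |\eta|^p$ and the Leibniz rule from Proposition \ref{708leibniz} to handle the term $X\varphi(u-v)$; crucially, since $\varphi$ is smooth and compactly supported in $\Om$, $|X\varphi|$ is bounded by $C\sqrt{m}\max|D\varphi|$. From the $\bar\Gamma$-compactness of $\mathcal{M}^X_p$ (the $W_X^{1,p}$-analogue of \cite[Theorem 19.5]{dalmaso}) I extract a subsequence and a limit $F\in\mathcal{M}^X_p$ which is automatically a measure, local (by the $W_X^{1,p}$-analogues of \cite[Proposition 16.4 and 16.15]{dalmaso}) and $W_X^{1,p}$-lower semicontinuous. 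The new uniform fundamental estimate together with a $W_X^{1,p}$-version of Theorem \ref{equivalenza} then promotes $\bar\Gamma$-convergence to $\Gamma(W_X^{1,p})$-convergence.

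For Step 2, the convexity of $F(\cdot,A)$ on $\anso{\Om}$ is obtained by the classical argument: given $u,v\in\anso{\Om}$ and $t\in(0,1)$, pick recovery sequences $u_h\to u$ and $v_h\to v$ in $\anso{\Om}$ realizing $F(u,A)$ and $F(v,A)$; then $tu_h+(1-t)v_h\to tu+(1-t)v$ strongly in $\anso{\Om}$ and convexity of each $F_h(\cdot,A)$ yields $F(tu+(1-t)v,A)\leq tF(u,A)+(1-t)F(v,A)$. The upper bound \eqref{onlybounded} follows from the trivial recovery sequence $u_h\equiv u$: since $u\in\anso{\Om}$ and $F_h(u,A)\leq\int_A a+c_1|Xu|^p+c_2|u|^p\,{\rm d}x$, one has
\begin{equation*}
F(u,A)\leq\liminf_{h\to+\infty}F_h(u,A)\leq\int_A a(x)+c_1|Xu(x)|^p+c_2|u(x)|^p\,{\rm d}x.
\end{equation*}
Applying the announced variant of \cite[Theorem 2.3]{essverzpin} produces a Carath\'eodory $f$ satisfying \eqref{onlybounded} and \eqref{convex} with $F(u,A)=\int_A f(x,u,Xu)\,{\rm d}x$ on $\anso{\Om}\times\mA$, whence $F\in\clag$.

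The main obstacle is Step 1, and more precisely the formulation and verification of the new uniform fundamental estimate on $\anso{\Om}$ together with the corresponding $\bar\Gamma$-to-$\Gamma$ upgrade. The classical estimate in Definition \ref{fundes} trades error terms against $\|u-v\|_{L^p(S)}$, but without coercivity this cannot control the $X$-derivative term $X\varphi(u-v)$ appearing in $X(\varphi u+(1-\varphi)v)$; the strategy is to absorb this term into the right-hand side by exploiting the $L^\infty$ control of $|X\varphi|$ and the upper bound, at the price of enlarging the functional $G$ to include the $|u|^p$ contribution. Once the correct formulation is fixed, the $W_X^{1,p}$-analogues of \cite[Theorem 18.7 and 19.5]{dalmaso} should follow by essentially the same abstract arguments, since the key compactness ingredients are a general property of $\bar\Gamma$-convergence of increasing functionals and are not tied to the underlying functional space.
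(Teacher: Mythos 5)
Your overall two-step architecture coincides with the paper's, and your Step 2 is essentially verbatim the paper's argument: convexity of the $\Gamma$-limit via recovery sequences in $\anso{\Om}$, the upper bound via the constant recovery sequence, and then the convex integral representation theorem (the paper applies \cite[Theorem 2.3]{essverzpin} directly rather than a new variant, since only the upper bound on $F$ is needed there). The substantive divergence is in Step 1. The paper does not transplant the class $\clam$ to the $X$-Sobolev setting: it obtains the $\bar\Gamma(W_X^{1,p})$-convergent subsequence from the general compactness theorem for metric spaces \cite[Theorem 16.9]{dalmaso}, which requires no structure on the class at all; it proves the new uniform fundamental estimate directly for the whole class $\clagt$ by an averaging argument over a finite family of cut-off functions (Proposition \ref{nuovafundest}); and it then uses that estimate to get subadditivity (Proposition \ref{quasisubprop}), hence the measure property via the criterion \cite[Theorem 14.23]{dalmaso}, locality by adapting \cite[Proposition 16.15]{dalmaso}, and the $\bar\Gamma$-to-$\Gamma$ upgrade by repeating the proof of \cite[Theorem 18.7]{dalmaso}. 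This is all packaged as Proposition \ref{firststepsobolev}, which the paper's proof of the theorem simply invokes.

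Your specific route through a class $\mathcal{M}^X_p$ has a genuine gap: the inclusion $\clag\subseteq\mathcal{M}^X_p$ fails with the proposed $G(u,A)=c_1\Psi_p(u,A)+c_2\int_A|u|^p\,{\rm d}x$, because the first inequality of the analogue of \eqref{708ufe1} demands $G\leq F$, and the Lagrangians in $\clag$ are only bounded above; for $f\equiv 0$ one has $F\equiv 0$ while $G>0$. Enlarging $G$ to include the $|u|^p$ contribution makes this worse, not better. Without coercivity there is no nontrivial measure sitting below $F$, so the two-sided-bound machinery of \cite[Theorems 19.4 and 19.5]{dalmaso} cannot be invoked as stated; this is precisely why the paper's uniform fundamental estimate on $\anso{\Om}$ places $\|u\|_{\anso{S}}^p+\|v\|_{\anso{S}}^p$ directly into the error term and is verified by hand for $\clagt$, with the finite family of admissible cut-offs fixed in advance of $u$, $v$ and $F$ (a point your plan omits, but which is what makes the crossover sequence $\varphi_h u_h+(1-\varphi_h)v_h$ converge in $\anso{\Om}$ in the subadditivity argument). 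If you replace your $\mathcal{M}^X_p$ step with this direct verification, the rest of your plan goes through.
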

\begin{thm}\label{mainsobolev1}
For any sequence $(F_h)_h\subseteq\clafto$ there exists a subsequence $(F_{h_k})_k$ and a functional $F\in\clafto$ such that 
\begin{equation*}
    F(\cdot,A)=\Gamma(W_X^{1,p})-\lim_{k\to+\infty}F_{h_k}(\cdot,A)\qquad\text{for any $A\in\mA$.}
\end{equation*}
\end{thm}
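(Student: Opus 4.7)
The plan is to follow the two-step scheme used for Theorem \ref{mainsobolev2}, adapted to the non-convex setting. In Step 1, I extract a subsequence whose $\Gamma(W_X^{1,p})$-limit $F$ exists and inherits measure, locality, $W_X^{1,p}$-lower semicontinuity, and the upper growth bound. In Step 2, I verify that $F$ satisfies the strong condition $(\omega X)$ with respect to the same family $\omega$. These hypotheses together allow me to invoke the non-convex integral representation \cite[Theorem 4.4]{essverzpin} and thereby place $F$ in $\clafto$.

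\textbf{Step 1.} Since $\clafto \subseteq \clagt$, I would argue as in Proposition \ref{firststeplp}, replacing the classical uniform fundamental estimate with the modified one introduced earlier in this section. The growth bound \eqref{onlybounded} and the Leibniz rule of Proposition \ref{708leibniz} let $\clagt$ fit into an abstract class on $\anso{\Om}$ which is $\bar{\Gamma}(W_X^{1,p})$-compact, and the $\anso{\Om}$-analogue of Theorem \ref{equivalenza} upgrades $\bar{\Gamma}$-convergence to $\Gamma(W_X^{1,p})$-convergence on each $A \in \mA$. This produces a subsequence $(F_{h_k})_k$ and a local measure functional $F : \anso{\Om} \times \mA \to [0, \infty]$ that is $W_X^{1,p}$-lower semicontinuous; the upper growth bound for $F$ follows by evaluating the $\Gamma$-limit on the constant sequence $u_k \equiv u$ and using the uniform bound satisfied by each $F_{h_k}$.

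\textbf{Step 2.} This is the crux. Fix $u, v \in \anso{\Om}$ and $A' \in \mA_0$ with $|u|, |v|, |Xu|, |Xv| \leq s$ and $|u - v|, |Xu - Xv| \leq r$ a.e. on $A'$. I would take a recovery sequence $v_k \to v$ in $\anso{\Om}$ with $F_{h_k}(v_k, A') \to F(v, A')$. A Lipschitz-type truncation of $v_k$ tailored to the $X$-Sobolev setting, exploiting $|v|, |Xv| \leq s$ a.e.\ and the strong condition of each $F_{h_k}$ to control the truncation error, produces $\tilde{v}_k \to v$ in $\anso{\Om}$ with $|\tilde{v}_k|, |X\tilde{v}_k| \leq s + \eta_k$ on $A'$ and still $F_{h_k}(\tilde{v}_k, A') \to F(v, A')$, where $\eta_k \to 0$. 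Setting $u_k := \tilde{v}_k + (u - v)$, one obtains $u_k \to u$ in $\anso{\Om}$ with $|u_k - \tilde{v}_k| = |u - v| \leq r$ and $|Xu_k - X\tilde{v}_k| = |Xu - Xv| \leq r$ a.e.\ on $A'$. Applying the strong condition $(\omega X)$ of $F_{h_k}$ to the pair $(u_k, \tilde{v}_k)$ and passing to the limit inferior, using the continuity and monotonicity of $\omega_s(x, r)$ in both $s$ and $r$ together with dominated convergence, one concludes $F(u, A') - F(v, A') \leq \int_{A'} \omega_s(x, r) \, {\rm d} x$; swapping the roles of $u$ and $v$ yields the reverse inequality.

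\textbf{Step 3 and main obstacle.} Having verified all the hypotheses of \cite[Theorem 4.4]{essverzpin}, I obtain a Carath\'eodory integrand $f : \Om \times \Ru \times \Rm \to [0, \infty]$ such that $F(u, A) = \int_A f(x, u(x), Xu(x)) \, {\rm d} x$ for every $A \in \mA$ and $u \in \anso{\Om}$; the bound \eqref{onlybounded} for $f$ follows from the growth bound on $F$ by a standard localization via constant functions and Lebesgue differentiation. Hence $F \in \clafto$. The hardest and most delicate step is certainly Step 2: the joint construction of recovery sequences with uniform $L^\infty$ bounds on values and $X$-gradients, preserving both $\anso{\Om}$-convergence and the recovery property, requires a careful truncation technique in the $X$-Sobolev framework, and it is precisely here that the new feature of Definition \ref{nuovastrongdef}, namely the continuity and monotonicity of $\omega_s$ in the size parameter $s$, plays an essential role.
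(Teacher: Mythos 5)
Your Steps 1 and 3 coincide with the paper's route: Step 1 is exactly Proposition \ref{firststepsobolev} (the $\bar\Gamma$-compactness on the metric space $\anso{\Om}$, upgraded to $\Gamma(W_X^{1,p})$-convergence through the modified uniform fundamental estimate and Proposition \ref{quasisubprop}), and Step 3 is the application of \cite[Theorem 4.4]{essverzpin}. The divergence, and the problem, is in Step 2. The paper proves the stability of the strong condition $(\omega X)$ (Propositions \ref{strongborel} and \ref{strongcompatta}) without any truncation: it takes separate recovery sequences $u_h\to u$, $v_h\to v$, introduces the ``bad set'' $Z_h\subseteq A'$ where the pointwise bounds are violated by more than $1/M$ or $1/m$, shows $|Z_h|\to 0$ by convergence in measure, controls $F_h(u_h,Z_h)+F_h(v_h,Z_h)$ via the growth bound \eqref{onlybounded} and equiintegrability, applies the strong condition with the slightly enlarged parameters $s+\frac 1M$, $r+\frac 1m$ on $A'\setminus Z_h$ (this requires first extending \eqref{2811strongprop} from open to Borel sets, which is what Proposition \ref{strongborel} does, using that each $F_h$ is a measure and $W_X^{1,p}$-continuous), and finally lets $M,m\to\infty$ using \eqref{incruno}--\eqref{incrdue} and monotone convergence.

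Your truncation-based Step 2 has two concrete gaps. First, the ``Lipschitz-type truncation'' producing $\tilde v_k\to v$ in $\anso{\Om}$ with the \emph{pointwise} bound $|X\tilde v_k|\leq s+\eta_k$ on $A'$, while preserving the recovery property $F_{h_k}(\tilde v_k,A')\to F(v,A')$, is not constructed and is not available in this generality: the vector fields are merely Lipschitz and satisfy (LIC), so no Acerbi--Fusco-type truncation controlling the $X$-gradient is at hand, and preserving the recovery property under such a modification is itself a nontrivial claim (the $\Gamma$-limit only gives a $\liminf$ inequality along the modified sequence). Second, even granting the truncation, the pair $(u_k,\tilde v_k)$ with $u_k:=\tilde v_k+(u-v)$ only satisfies $|u_k|,|Xu_k|\leq s+r+\eta_k$, so Definition \ref{nuovastrongdef} yields the bound $\int_{A'}\omega_{s+r+\eta_k}(x,r)\,{\rm d}x$ and, in the limit, $\int_{A'}\omega_{s+r}(x,r)\,{\rm d}x$ rather than the required $\int_{A'}\omega_s(x,r)\,{\rm d}x$; truncating $u_k$ as well would destroy the identities $u_k-\tilde v_k=u-v$ and $Xu_k-X\tilde v_k=Xu-Xv$ on which your difference bound relies. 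The paper's bad-set argument avoids both issues, precisely because it never needs uniform $L^\infty$ control of the recovery sequences, only smallness of the set where that control fails.
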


As already said, one of the key step for the proof of these results is introducing a suitable notion of uniform fundamental estimate. Therefore, inspired by the classical notion stated in \cite{dalmaso}, we give the following definition.
\begin{deff}
Let $\mathcal{F}$ be a class of non-negative local functionals defined on $\anso{\Om}\times\mA$.
We say that $\mathcal{F}$ satisfies the \emph{uniform fundamental estimate on $\anso{\Om}$} if, for any $\varepsilon>0$ and for
any $A',A'',B \in\mA$, with $A' \Subset A''$, there exists a constant $M >\, 0$ and a finite family $\{\varphi_1,\ldots,\varphi_k\}$ of smooth cut-off functions between $A'$ and $A''$ such that for any $u,\,v\in \anso{\Om}$ and for any $F\in\mathcal{F}$, we can choose $\varphi\in\{\varphi_1,\ldots,\varphi_k\}$ such that
\[
\begin{split}
F\Big(\varphi u&+(1-\varphi)v, A'\cup B\Big)\le\Big(F(u,A'')+F(v,B)\Big)+\\
&+\varepsilon\Big( \|u\|_{\anso{S}}^p+\|v\|_{\anso{S}}^p+1\Big)+ M \|{u-v}\|_{L^p(S)},
\end{split}
\]
where $S = (A''\setminus A')\cap B$.
\end{deff}
Let us point out the differences between the two definitions. From one hand, this estimate is stronger, since it requires that the choice of the cut-off function must be done among a finite family of candidates which depends only on $\eps,A',A''$ and $B$. This requirement, as we will see, is crucial to guarantee a uniform estimate for the $X$-gradients of the test functions. However, we replace some of the $L^p$ norms on the right hand side of \eqref{classicalfundest} with $W_X^{1,p}$-norms, thus weakening some of the requirements. This choice, as we will see, is crucial to avoid the coercivity assumptions on the Lagrangians. The following results and their proofs are respectively the counterparts of \cite[Proposition 19.1]{dalmaso} and \cite[Proposition 18.3]{dalmaso}.

\begin{prop}\label{nuovafundest}
$\clagt$ satisfies the uniform fundamental estimate on $\anso{\Om}$.
\end{prop}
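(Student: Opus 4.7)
My plan is to adapt the classical strip-and-pigeonhole argument (cf.\ \cite[Proposition~18.3]{dalmaso}) to the $X$-gradient setting. I will use Proposition~\ref{503soinanso}$(ii)$ to compare horizontal and Euclidean gradients of the cut-off functions, and Proposition~\ref{708leibniz} to expand $X(\varphi u+(1-\varphi)v)$. The key novelty relative to the classical proof is that the finite family $\{\varphi_1,\ldots,\varphi_k\}$ must be fixed \emph{a priori}, depending only on $\varepsilon,A',A'',B$; the pigeonhole index is then selected as a function of the triple $(F,u,v)$, which is what the new definition requires.

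Fix $\varepsilon>0$, $A'\Subset A''$ and $B\in\mA$, and set $S:=(A''\setminus A')\cap B$. For $k\in\NN$ to be chosen later, I would construct nested open sets $A'=A_0\Subset A_1\Subset\cdots\Subset A_k=A''$ together with cut-off functions $\varphi_i\in C^\infty_c(\Om)$ satisfying $0\le\varphi_i\le 1$, $\varphi_i\equiv 1$ on $\overline{A_{i-1}}$ and $\mathrm{supp}\,\varphi_i\subset A_i$. Setting $C^*:=\sqrt{m}\max_{j,i}\|c_{j,i}\|_\infty$, Proposition~\ref{503soinanso}$(ii)$ gives $|X\varphi_i|\le C^*|D\varphi_i|$, so that $M_0:=C^*\max_i\|D\varphi_i\|_\infty$ uniformly bounds every $|X\varphi_i|$ and depends only on the chosen family. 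For given $u,v\in\anso{\Om}$ and $F\in\clagt$ with Lagrangian $f$, define $w_i:=\varphi_i u+(1-\varphi_i)v$ and $S_i:=(A_i\setminus\overline{A_{i-1}})\cap B$, so that the $S_i$ are pairwise disjoint subsets of $S$. Since $w_i=u$ on $A_{i-1}$ and $w_i=v$ on $\Om\setminus A_i$, splitting the integral over $A'\cup B$ along this decomposition yields
$$F(w_i,A'\cup B)\le F(u,A'')+F(v,B)+F(w_i,S_i).$$
Applying \eqref{onlybounded}, Proposition~\ref{708leibniz} (which gives $Xw_i=\varphi_i Xu+(1-\varphi_i)Xv+(X\varphi_i)(u-v)$) together with the elementary inequalities $|a+b+c|^p\le 3^{p-1}(|a|^p+|b|^p+|c|^p)$ and $|\varphi_i u+(1-\varphi_i)v|^p\le 2^{p-1}(|u|^p+|v|^p)$, and setting $C:=\max(3^{p-1}c_1,2^{p-1}c_2)$, I obtain
$$F(w_i,S_i)\le\int_{S_i}\!a(x)\,dx+C\!\int_{S_i}\!\bigl(|u|^p+|v|^p+|Xu|^p+|Xv|^p\bigr)dx+3^{p-1}c_1 M_0^p\!\int_{S_i}\!|u-v|^p dx,$$
uniformly in $F\in\clagt$ and $i\in\{1,\ldots,k\}$.

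Summing over $i$, using disjointness of the $S_i\subset S$ and pigeonholing, one finds $i^*\in\{1,\ldots,k\}$ such that
$$F(w_{i^*},S_{i^*})\le\frac{1}{k}\Bigl(\|a\|_{L^1(\Om)}+C\bigl(\|u\|_{\anso{S}}^p+\|v\|_{\anso{S}}^p\bigr)+3^{p-1}c_1 M_0^p\|u-v\|_{L^p(S)}^p\Bigr).$$
The main technical subtlety is the conversion of the naturally appearing $\|u-v\|_{L^p(S)}^p$ into the first-power quantity $\|u-v\|_{L^p(S)}$ required by the definition. I would resolve this by a dichotomy: if $\|u-v\|_{L^p(S)}\le 1$ then $\|u-v\|_{L^p(S)}^p\le\|u-v\|_{L^p(S)}$, while if $\|u-v\|_{L^p(S)}>1$ then $\|u-v\|_{L^p(S)}^p\le 2^{p-1}(\|u\|_{\anso{S}}^p+\|v\|_{\anso{S}}^p)$. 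Choosing $k$ so large that $\max(C,\|a\|_{L^1(\Om)},2^{p-1}\cdot 3^{p-1}c_1 M_0^p)/k\le\varepsilon$, and setting $M:=3^{p-1}c_1 M_0^p/k$, the two regimes combine into the required estimate with the selected cut-off $\varphi=\varphi_{i^*}\in\{\varphi_1,\ldots,\varphi_k\}$.
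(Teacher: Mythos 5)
Your argument is, in its core, the paper's own: the same chain $A'=A_0\Subset A_1\Subset\dots\Subset A_k=A''$ of nested sets with a pre-selected family of cut-offs, the same three-piece splitting $F(w_i,A'\cup B)\le F(u,A'')+F(v,B)+F(w_i,S_i)$ over the disjoint strips $S_i$, the same use of the Leibniz rule (Proposition \ref{708leibniz}) and of the growth bound \eqref{onlybounded} to estimate $F(w_i,S_i)$, and the same averaging/pigeonhole selection of $i^*$. Up to replacing the paper's two applications of the $2^{p-1}$-inequality by a single $3^{p-1}$, everything through your displayed bound for $F(w_{i^*},S_{i^*})$ is correct.

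The extra step you add --- the dichotomy meant to turn $\|u-v\|_{L^p(S)}^p$ into $\|u-v\|_{L^p(S)}$ --- does not work, because your choice of $k$ is circular. In the regime $\|u-v\|_{L^p(S)}>1$ you need $2^{p-1}3^{p-1}c_1M_0^p/k\le\varepsilon$, but $M_0=C^*\max_i\|D\varphi_i\|_\infty$ is determined by the cut-off family, which can only be built once $k$ is fixed: the $k$ functions $\varphi_i$ must each drop from $1$ to $0$ inside one of $k$ disjoint shells partitioning the fixed gap between $A'$ and $A''$, so $\max_i\|D\varphi_i\|_\infty$ grows at least linearly in $k$ and hence $M_0^p/k\gtrsim k^{p-1}$. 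This does not tend to $0$ as $k\to\infty$ (it diverges for $p>1$ and stays bounded away from $0$ for $p=1$), so no admissible $k$ exists and the second branch of the dichotomy collapses. The paper does not attempt this reduction: its proof stops at $\dots+M\|u-v\|^p_{L^p(S)}$, with $M$ a possibly large constant depending only on $\varepsilon,A',A'',B$; in other words, the estimate one actually proves (and the only one needed later, in Proposition \ref{quasisubprop}, where it is applied to $u_h-v_h\to 0$ in $L^p(S)$ so that either power is negligible) carries the $p$-th power, consistently with the classical formulation behind Definition \ref{fundes}. Drop the dichotomy, keep $M\|u-v\|^p_{L^p(S)}$, and your proof is complete and coincides with the paper's.
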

\begin{proof}
 
Let us set $d_1:=c_1$, $d_2:=c_2$ and $d_4:=2^{p-1}$ and $\sigma(C):=\int_C |a(x)|dx$ for any $C\in\mathcal{B}$. Fix $\eps>0$, $B\in\mA$ and $A',A''\in\mA$ with $A'\Subset A''$. Choose $A\in\mA$ with $A'\Subset A\Subset A''$ and $k\in\mathbb{N}$ with
\begin{equation*}
    \max\left\{\frac{d_1+d_2d_4}{k},\frac{\sigma(A\setminus\overline{A'})}{k}\right\}<\eps.
\end{equation*}
Moreover, choose open sets $A_1,\ldots,A_{k+1}$ such that $A'\Subset A_1\Subset\,\ldots\,\Subset A_{k+1}\Subset A$, and, for any $i=1,\ldots,k$ take a smooth cut-off function $\varphi_i$ between $A_i$ and $A_{i+1}$. Finally, set
\begin{equation*}
    M:=\frac{d_1d_4}{k}\max_{1\leq i\leq k}\max_{x\in\Om}|X\varphi_i(x)|^p.
\end{equation*}
Let $F\in\clagt$ and $u,v\in\anso{\Om}$. Then, for any $i=1,\ldots,k$, from the choice of $\varphi_i$ it follows that
\begin{equation}\label{stimatrepezzi}
    F(\varphi_i u+(1-\varphi_i)v,A'\cup B)\leq F(u,(A'\cup B)\cap \overline{A_i})+F(v,B\setminus A_{i+1})+F(\varphi_i u+(1-\varphi_i)v,S_i),
\end{equation}
where $S_i:=B\cap (A_{i+1}\setminus\overline{A_i})$. Setting $I_i \coloneqq F(\varphi_i u+(1-\varphi_i)v,S_i),$ 
from the bound 
on the Lagrangian and arguing as in the proof of Proposition \ref{firststeplp}, we get that 
\begin{equation*}
    \begin{split}
        I_i&\leq d_1\int_{S_i}|X(\varphi_i u+(1-\varphi_i)v)|^p {\rm d}x+d_2\int_{S_i}|\varphi_i u+(1-\varphi_i)v|^p {\rm d}x+\sigma(S_i)\\
        &=d_1\int_{S_i}|uX\varphi_i +\varphi_i Xu-vX\varphi_i+(1-\varphi_i)Xv)|^p {\rm d}x+d_2\int_{S_i}| u|^p {\rm d}x +d_2\int_{S_i}|v|^p {\rm d}x+\sigma(S_i)\\
        &=d_1\int_{S_i}|(\varphi_i Xu+(1-\varphi_i)Xv)+X\varphi_i(u-v)|^p {\rm d}x+d_2\int_{S_i}(| u|^p + |v|^p ){\rm d}x +\sigma(S_i) \\
        &\leq d_1d_4\left[\int_{S_i}|\varphi_i Xu+(1-\varphi_i)Xv|^p+\int_{S_i}|X\varphi_i|^p|u-v|^p {\rm d}x\right]+d_2 \int_{S_i}(| u|^p + |v|^p ){\rm d}x +\sigma(S_i)\\
        &\leq d_1d_4\left[\int_{S_i}| Xu|^p {\rm d}x+\int_{S_i}|Xv|^p {\rm d}x\right]+kM\int_{S_i}|u-v|^p {\rm d}x+d_2 \int_{S_i}(| u|^p + |v|^p ){\rm d}x+\sigma(S_i)\\
        &\leq (d_2+d_1d_4)\left(\|u\|^p_{\anso{S_i}}+\|v\|^p_{\anso{S_i}}\right)+kM\|u-v\|^p_{\lp{S_i}}+\sigma(S_i).
    \end{split}
\end{equation*}
Noticing that $\sigma$ is a measure and that 
$$S_1\cup\,\ldots\,\cup S_k\subseteq (A\setminus\overline{A'})\cap B\subseteq S,$$ 
and recalling the choice of $k$, it follows that
\begin{equation}\label{stimafinalefund}
\begin{split}
    \min_{1\leq i\leq k}I_i&\leq\frac{1}{k}\sum_{i=1}^k I_k\leq\frac{d_2+d_1d_4}{k}\left(\|u\|^p_{\anso{S}}+\|v\|^p_{\anso{S}}\right)+M\|u-v\|^p_{\lp{S}}+\frac{\sigma(A\setminus\overline{A'})}{k}\\
    &\leq\eps\left(\|u\|^p_{\anso{S}}+\|v\|^p_{\anso{S}}+1\right)+M\|u-v\|^p_{\lp{S}}.
\end{split}
\end{equation}
Therefore, if $\varphi_i\in\{\varphi_1,\ldots,\varphi_k\}$ is chosen to realize the minimum, observing that $F$ is a measure, $(A'\cup B)\cap\overline{A_i}\subseteq A''$ and $B\setminus A_{i+1}\subseteq B$, thanks to \eqref{stimatrepezzi} and \eqref{stimafinalefund} the thesis follows.
\end{proof}

\begin{prop}\label{quasisubprop}
Let $(F_h)_h\in\clagt$. Then it holds that
\begin{equation}\label{quasisub}
    F''(u,A'\cup B)\leq F''(u,A'')+F''(u,B)
\end{equation}
for any $u\in\anso{\Om}$, $B\in\mA$ and $A',A''\in\mA$ with $A'\Subset A''$.
\end{prop}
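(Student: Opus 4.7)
The plan is to follow the classical argument that derives countable subadditivity (or, here, the ``quasi-subadditivity'' inequality \eqref{quasisub}) for the $\Gamma$-upper limit from a fundamental estimate, adapted to the stronger/weaker notion introduced in this section. First I would dispose of the trivial case $F''(u,A'')=+\infty$ or $F''(u,B)=+\infty$, in which \eqref{quasisub} is immediate. Assuming both quantities finite, by the definition of $\Gamma(W_X^{1,p})-\limsup$ I would pick two recovery sequences $(u_h)_h,(v_h)_h\subseteq\anso{\Om}$, both converging to $u$ in the strong topology of $\anso{\Om}$, such that
\begin{equation*}
\limsup_{h\to\infty}F_h(u_h,A'')=F''(u,A''),\qquad \limsup_{h\to\infty}F_h(v_h,B)=F''(u,B).
\end{equation*}

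Next I would fix $\varepsilon>0$ and apply Proposition \ref{nuovafundest} to obtain a constant $M>0$ and a \emph{finite} family $\{\varphi_1,\ldots,\varphi_k\}$ of smooth cut-off functions between $A'$ and $A''$, depending only on $\varepsilon,A',A'',B$, such that for every $h\in\mathbb{N}$ there exists an index $i(h)\in\{1,\ldots,k\}$ for which, setting $w_h:=\varphi_{i(h)}u_h+(1-\varphi_{i(h)})v_h$ and $S:=(A''\setminus A')\cap B$,
\begin{equation*}
F_h(w_h,A'\cup B)\le F_h(u_h,A'')+F_h(v_h,B)+\varepsilon\bigl(\|u_h\|^p_{\anso{S}}+\|v_h\|^p_{\anso{S}}+1\bigr)+M\|u_h-v_h\|_{L^p(S)}.
\end{equation*}
Since $\{1,\ldots,k\}$ is finite, by a pigeonhole argument I would extract a subsequence (not relabeled) along which $i(h)\equiv i$ is constant, so that $w_h=\varphi_i u_h+(1-\varphi_i)v_h$ with a fixed cut-off.

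Then I would verify that $w_h\to u$ in the strong topology of $\anso{\Om}$. Convergence in $L^p$ is clear since $u_h,v_h\to u$ in $L^p$. For the $X$-gradient, the Leibniz rule of Proposition \ref{708leibniz} gives
\begin{equation*}
Xw_h=\varphi_i Xu_h+(1-\varphi_i)Xv_h+(X\varphi_i)(u_h-v_h),
\end{equation*}
and since $X\varphi_i$ is bounded (because $\varphi_i\in C^\infty_c(\Om)$) and $u_h-v_h\to 0$ in $L^p$, while $\varphi_i Xu_h+(1-\varphi_i)Xv_h\to Xu$ in $L^p(\Rm)$, we obtain $Xw_h\to Xu$ in $L^p(\Rm)$. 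Hence $F''(u,A'\cup B)\le\limsup_{h\to\infty}F_h(w_h,A'\cup B)$.

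Finally, I would take $\limsup$ in the fundamental estimate along the chosen subsequence. Using subadditivity of $\limsup$, the identity $\|u_h-v_h\|_{L^p(S)}\to 0$, and the fact that $\|u_h\|^p_{\anso{S}}$ and $\|v_h\|^p_{\anso{S}}$ tend to $\|u\|^p_{\anso{S}}$ (hence are bounded), I would get
\begin{equation*}
F''(u,A'\cup B)\le F''(u,A'')+F''(u,B)+\varepsilon\bigl(2\|u\|^p_{\anso{S}}+1\bigr),
\end{equation*}
and letting $\varepsilon\to 0^+$ yields \eqref{quasisub}. The one delicate point is the $h$-dependence of the cut-off function in the fundamental estimate, which is precisely why the stronger formulation introduced in this section (choice within a finite family) was needed; without it the candidate $w_h$ might fail to converge strongly to $u$ in $\anso{\Om}$, as the $X$-gradient of the cut-off could blow up.
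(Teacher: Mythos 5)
Your overall strategy coincides with the paper's: recovery sequences for $F''(\cdot,A'')$ and $F''(\cdot,B)$, the uniform fundamental estimate of Proposition \ref{nuovafundest}, strong $\anso{\Om}$-convergence of the glued sequence $w_h$ (which is exactly where the finiteness of the family of cut-offs is exploited), then passage to the $\limsup$ and $\eps\to 0^+$. However, the pigeonhole step introduces a genuine gap as written. After extracting a subsequence $(h_j)_j$ along which the index $i(h)$ is constant, you only produce a sequence $(w_{h_j})_j$ converging to $u$ and indexed by that subsequence, and you then assert $F''(u,A'\cup B)\leq\limsup_j F_{h_j}(w_{h_j},A'\cup B)$. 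But $F''(\cdot,A'\cup B)$ is the $\Gamma$-upper limit of the \emph{full} sequence $(F_h)_h$: its defining inequality $F''(u,A'\cup B)\leq\limsup_h F_h(z_h,A'\cup B)$ requires a competitor $(z_h)_h$ defined for every $h$ and converging to $u$. The $\Gamma$-upper limit computed along a subsequence is in general strictly smaller than that of the whole sequence (take, e.g., $F_h\equiv 0$ for $h$ even and $F_h\equiv 1$ for $h$ odd), so the inequality you write does not follow from what you have established.

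The repair is immediate and is precisely what the paper does: do not pass to a subsequence at all. Keep $w_h:=\varphi_{i(h)}u_h+(1-\varphi_{i(h)})v_h$ for every $h$; since the cut-offs range over the finite family $\{\varphi_1,\dots,\varphi_k\}$ fixed in advance (depending only on $\eps, A', A'', B$), one has $\sup_h\|X\varphi_{i(h)}\|_\infty\leq\max_{1\leq i\leq k}\|X\varphi_i\|_\infty<+\infty$, so your own computation via the Leibniz rule shows $Xw_h\to Xu$ in $L^p(\Om,\Rm)$ along the whole sequence, not just along a subsequence. The rest of your argument (taking $\limsup$ in the fundamental estimate, $\|u_h-v_h\|_{L^p(S)}\to0$, boundedness of $\|u_h\|^p_{\anso{S}}$ and $\|v_h\|^p_{\anso{S}}$, then letting $\eps\to0^+$) goes through verbatim, and with that one change the proof matches the paper's.
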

\begin{proof}
Let $u,A',A'',B$ as above
fix $\eps>0$, and let $(u_h)_h,(v_h)_h\subseteq\anso{\Om}$ be two recovery sequences for $u$ with respect to $F''(\cdot,A'')$ and $F''(\cdot,B)$ respectively. From Proposition \ref{nuovafundest} we know that $(F_h)_h$ satisfies the uniform fundamental estimate on $\anso{\Om}$. Therefore there exists $M>0$ and a finite family $\{\varphi^1,\ldots,\varphi^k\}$ of smooth cut-off functions between $A'$ and $A''$, depending only on $\eps,A',A''$ and $B$, and a sequence $(\varphi_h)_h\subseteq\{\varphi^1,\ldots,\varphi^k\}$, such that
\begin{equation}\label{fundconacca}
   \begin{split}
F_h\Big(\varphi_h u_h+(1-\varphi_h)v_h, A'\cup B\Big)&\le\Big(F_h(u_h,A'')+F_h(v_h,B)\Big)+\\
&+\varepsilon\Big( \|u_h\|_{\anso{S}}^p+\|v_h\|_{\anso{S}}^p+1\Big)+ M \|{u_h-v_h}\|_{L^p(S)},
\end{split} 
\end{equation}
where $S = (A''\setminus A')\cap B$.
Let us define $w_h:=\varphi_h u_h+(1-\varphi_h)v_h$. Then it follows that
\begin{equation*}
    \|w_h-u\|_{\lp{\Om}}=\|\varphi_h(u_h-v_h)\|_{\lp{\Om}}+\|v_h-u\|_{\lp{\Om}}\leq\|u_h-v_h\|_{\lp{\Om}}+\|v_h-u\|_{\lp{\Om}},
\end{equation*}
and moreover
\begin{equation*}
\begin{split}
    \|Xw_h-Xu\|_{\lp{\Om}}&=\|X\varphi_h\cdot u_h+\varphi_hXu_h-X\varphi_h\cdot v_h+(1-\varphi_h)Xv_h-Xu\|_{\lp{\Om}}\\
    &\leq\|X\varphi_h(u_h-v_h)\|_{\lp{\Om}}+\|\varphi_h(Xu_h-Xv_h)\|_{\lp{\Om}}+\|Xv_h-Xu\|_{\lp{\Om}}\\
    &\leq\max_{1\leq i\leq k}\||X\varphi^k|^p\|_{\infty}\cdot\|u_h-v_h\|_{\lp{\Om}}+\|Xu_h-Xv_h\|_{\lp{\Om}}+\|Xv_h-Xu\|_{\lp{\Om}}.
\end{split}
\end{equation*}
Therefore we conclude that $w_h$ converges to $u\in\anso{\Om}$. This fact, the choices of $u_h$ and $v_h$ and \eqref{fundconacca} allow to conclude that
\begin{equation*}
\begin{split}
    F''(u,A'\cup B)&\leq\limsup_{h\to\infty}F''(w_h,A'\cup B)\\
    &\leq\limsup_{h\to\infty}F''(u_h,A'')+\limsup_{h\to\infty}F''(v_h,B)\\
    &+\varepsilon\Big( \|u\|_{\anso{S}}^p+\|v\|_{\anso{S}}^p+1\Big)\\
    &=F''(u,A'')+F''(u,B)+\varepsilon\Big( \|u\|_{\anso{S}}^p+\|v\|_{\anso{S}}^p+1\Big).
\end{split}
\end{equation*}
Being $\eps$ arbitrary, the thesis follows.
\end{proof}
We are ready to complete \textbf{Step 1} of our general scheme.
\begin{prop}\label{firststepsobolev}
For any sequence $(F_h)_h\subseteq\clagt$ there exists a subsequence $(F_{h_k})_k$ and a functional $F:\anso{\Om}\times\mA\scu[0,\infty]$ such that 
\begin{itemize}
    \item $F$ is a measure
    \item $F$ is local
    \item $F$ is $W_X^{1,p}-$lower semicontinuous
    \item  for any $u\in\anso{\Om}$ and $A\in\mA$ it holds that
    \begin{equation}\label{boundsobolev}
        F(u,A)\leq\int_Aa(x)+c_1|Xu(x)|^p+c_2|u(x)|^p {\rm d} x
    \end{equation}
\end{itemize}
and moreover we have that
\begin{equation}\label{gammalimsobolev}
    F(\cdot,A)=\Gamma(W_X^{1,p})-\lim_{k\to+\infty}F_{h_k}(\cdot,A)
\end{equation}
for any $A\in\mA$.
\end{prop}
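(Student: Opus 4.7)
The plan is to mimic the proof of Proposition \ref{firststeplp} but to replace the application of Theorem \ref{equivalenza} (which crucially relies on coercivity) with a direct argument based on the newly introduced uniform fundamental estimate. First I would apply a general $\bar\Gamma$-compactness principle for increasing functionals (cf.\ \cite{dalmaso}) to extract a subsequence, still denoted $(F_{h_k})_k$, whose $\bar\Gamma(W_X^{1,p})$-limit $F$ exists on $\anso{\Om}\times\mA$, and which is increasing, inner regular, and $W_X^{1,p}$-lower semicontinuous. Locality of $F$ follows from the locality of each $F_{h_k}$ and the standard properties of $\bar\Gamma$-limits. The upper bound \eqref{boundsobolev} is obtained by testing $F''(u,A)$ against the constant recovery sequence $u_{h_k}\equiv u$ and applying \eqref{onlybounded}.

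Next I would verify that $F$ is a measure by means of the De Giorgi--Letta criterion. Inner regularity, monotonicity, and the identity $F(u,\emptyset)=0$ follow directly from $\bar\Gamma$-convergence, while superadditivity on disjoint open sets follows from the locality of each $F_{h_k}$ via the standard argument of \cite[Proposition 18.4]{dalmaso}. The key new ingredient is subadditivity: combining the quasi-subadditivity of $F''$ provided by Proposition \ref{quasisubprop} with the inner regularity of $F$ yields $F(u,A\cup B)\le F(u,A)+F(u,B)$ for every $u\in\anso{\Om}$ and $A,B\in\mA$.

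Finally, to upgrade $\bar\Gamma(W_X^{1,p})$-convergence to pointwise $\Gamma(W_X^{1,p})$-convergence, i.e.\ to obtain \eqref{gammalimsobolev}, I would show that $F''(\cdot,A)\le F(\cdot,A)$ on every $A\in\mA$, since the chain $F\le F'\le F''$ then forces $F=F'=F''$. Given $A\in\mA$, one chooses compact sets $K_n\subseteq A$ with $|A\setminus K_n|\to 0$ and open sets $A_n',A_n''$ with $K_n\subseteq A_n'\Subset A_n''\Subset A$. Applying Proposition \ref{quasisubprop} with $B_n:=A\setminus K_n$ (so that $A_n'\cup B_n=A$) and then using \eqref{onlybounded} on the second summand, one obtains
\[
F''(u,A)\le F''(u,A_n'')+F''(u,A\setminus K_n)\le F(u,A)+\int_{A\setminus K_n}\bigl(a(x)+c_1|Xu|^p+c_2|u|^p\bigr)\,{\rm d} x,
\]
and the integral vanishes as $n\to\infty$ by absolute continuity.

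The main obstacle is precisely this last step: without coercivity one cannot appeal to Theorem \ref{equivalenza}, and the subtle interplay between the upper bound \eqref{onlybounded} (which supplies an absolutely continuous integral majorant) and the uniform fundamental estimate on $\anso{\Om}$ (through Proposition \ref{quasisubprop}) is what replaces the role previously played by the coercive measure $G$. A secondary technical point is ensuring that the finite family of cut-off functions provided by the new uniform fundamental estimate transfers properly through the passage to the $\Gamma$-limit; this is already addressed in the proof of Proposition \ref{quasisubprop}, whose conclusion is the only ingredient actually needed here.
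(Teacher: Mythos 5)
Your proposal is correct and follows essentially the same route as the paper: extract a $\bar\Gamma(W_X^{1,p})$-convergent subsequence, establish locality and the De Giorgi--Letta measure criterion (with subadditivity coming from Proposition \ref{quasisubprop}), bound $F$ by the integral majorant via constant recovery sequences, and then upgrade $\bar\Gamma$- to $\Gamma$-convergence. Your final step merely spells out explicitly the adaptation of \cite[Theorem 18.7]{dalmaso} that the paper invokes by reference, using the absolutely continuous upper bound in place of the coercive comparison measure.
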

\begin{proof}
Since $(\anso{\Om},\|\cdot\|_{\anso{\Om}})$ is a metric space, by \cite[Theorem 16.9]{dalmaso} we know that, up to a subsequence, $(F_h)_h\,$ $\bar\Gamma(W_X^{1,p})$-converges to a functional $F:\anso{\Om}\times\mA\scu\overline{\Ru}$. Being $F$ a $\bar\Gamma$-limit, we know from \cite[Remark 16.3]{dalmaso} that $F$ is increasing, inner regular and $W_X^{1,p}-$ lower semicontinuous. Moreover, thanks to \cite[Proposition 16.12]{dalmaso}, we know that $F$ is superadditive.
Let us show that $F$ is non-negative. Indeed, fix $A\in\mA$ and $u\in\anso{\Om}$, then we know that
\begin{equation*}\label{innerenvelope}
    F(u,A)=\sup\bigl\{\inf\{\liminf_{h\to\infty}F_h(u_h,A')\,:\,u_h\to u\text{ in }\anso{\Om}\}\,:\,A'\in\mA,\,A'\Subset A\bigr\}.
\end{equation*}
As each $F_h(u_h,A')$ is non-negative, then $F(u,A)\geq 0$.
Moreover, in the same way we can see that $F(u,\emptyset)=0$ for any $u\in\anso{\Om}.$
Now, adapting the proof of \cite[Proposition 16.15]{dalmaso}, we show that $F$ is local.
Let us fix $A\in\mA$ and $u,v\in\anso{\Om}$ coinciding a.e. on $A$. Fix $A'\Subset A$, take a smooth cut-off function $\varphi$ between $A'$ and $A$ and let $(u_h)_h\subseteq\anso{\Om}$ be a recovery sequence for $u$ with respect to $F'(\cdot,A')$. We define a new sequence $(v_h)_h$ requiring that
$$v_h:=\varphi u_h+(1-\varphi)v.$$
It is clear that
\begin{equation*}
    \|v_h-v\|_{\lp{\Om}}=\|\varphi(u_h-v)\|_{\lp{\Om}}=\|\varphi(u_h-v)\|_{\lp{A}}\leq\|u_h-u\|_{\lp{A}},
\end{equation*}
and moreover
\begin{equation*}
\begin{split}
    \|Xv_h-Xv\|_{\lp{\Om}}&=\|X\varphi(u_h-v)+\varphi(Xu_h-Xv)\|_{\lp{\Om}}\\
    &\leq\|X\varphi(u_h-v)\|_{\lp{A}}+\|\varphi(Xu_h-Xv)\|_{\lp{A}}\\
    &\leq \||X\varphi|^p\|_\infty\|u_h-u\|_{\lp{A}}+\|Xu_h-Xu\|_{\lp{A}}.
\end{split}
\end{equation*}
Therefore we have that $v_h$ converges to $v$ in $\anso{\Om}$. As each $F_h$ is local and $u_h=v_h$ on $A'$, we conclude that
\begin{equation*}
\begin{split}
    F'(v,A')\leq\liminf_{h\to\infty}F_h(v_h,A')=\liminf_{h\to\infty}F_h(u_h,A')=F'(u,A').
\end{split}
\end{equation*}
As the converse inequality can be proved exchanging the roles of $u$ and $v$, we conclude that $F'(u,A')=F'(v,A')$. Finally, being $A'\Subset A$ arbitrary and recalling the definition of a $\bar\Gamma-$limit, we conclude that $F$ is local. Moreover, thanks to Proposition \ref{quasisubprop}, we can repeat essentially the same steps of the proof of \cite[Proposition 18.4]{dalmaso} and achieve that $F$ is subadditive. Notice that, thanks to \cite[Theorem 14.23]{dalmaso} and the previous steps, this suffices to conclude that $F$ is a measure.
If we define now $G :\anso{\Om}\times\mA \rightarrow [0,+\infty]$ as
$$G(u,A):=\int_A a(x)+c_2|u|^p+c_1|Xu|^p$$
for any $u\in\anso{\Om}$ and for any $A\in\mA$, it is clear that $G$ is a measure and that, thanks to our hypotheses,
$F_h\leq G$ for any $h\in\mathbb{N}$. 
Therefore, if $u\in\anso{\Om}$ and $A\in\mA$, it follows that
$$F(u,A)\leq\liminf_h F_h(u,A)\leq G(u,A).$$
Finally, thanks again to Proposition \ref{quasisubprop} and repeating the proof of \cite[Theorem 18.7]{dalmaso}, we conclude that 
\begin{equation}\label{gammalimdue}
    F(\cdot,A)=\Gamma(W_X^{1,p})-\lim_{h\to+\infty}F_{h}(\cdot,A),
\end{equation}
for any $A\in\mA$.
\end{proof}
We have developed all the tools that we need to prove Theorem \ref{mainsobolev2}.

\begin{proof}[Proof of Theorem \ref{mainsobolev2}]
Since $(F_h)_h\subseteq \clag$, from Proposition \ref{firststepsobolev} we know that there exists a functional $F:\anso{\Om}\times\mA\scu[0,+\infty]$ which is a measure, local, satisfies \eqref{boundsobolev} and such that \eqref{gammalimsobolev} holds. Moreover, arguing as in the proof of Theorem \ref{709main1}, $F$ is convex. Therefore $F$ satisfies all the hypotheses of \cite[Theorem 2.3]{essverzpin}, and so we conclude that $F\in\clag$.
\end{proof}
In order to prove Theorem \ref{mainsobolev1}, we wish to apply \cite[Theorem 4.3]{essverzpin} to a suitable functional $F$. Anyway, among the other things, we need to guarantee that $F$ satisfies the strong condition $(\omega X)$.
With the two following propositions we are going to show that, whenever we work in $\clafto$, the strong condition $(\omega X)$ with respect to $\omega$ is preserved by the operation of $\Gamma(W_X^{1,p})$-limit.
\begin{prop}\label{strongborel}
If a functional $F:\anso{\Om}\times\mA\scu[0,+\infty]$ is a measure, it is $W_X^{1,p}$-continuous, it satisfies \eqref{boundsobolev} for any $u\in\anso{\Om}$ and for any $B\in\mathcal{B}$ and it satisfies the strong condition $(\omega X)$ with respect to $\omega$, then it holds that
\begin{equation}\label{newstronguno}
			|F(v,B')-F(u,B')|\leq\int_{B'}\omega_s(x,r) {\rm d} x
		\end{equation}
		for any $s\geq 0$, $B'\in\mB_0$, $r\geq 0$, $u,v\in \anso{\Om}$ such that 
		\begin{equation}\label{newstrongdue}
		\begin{aligned}
		|u(x)|,|v(x)|,|Xu(x)|,|Xv(x)|\leq s\\
		|u(x)-v(x)|,|Xu(x)-Xv(x)|\leq r
		\end{aligned}
		\end{equation}
		for a.e. $x\in B'$.
\end{prop}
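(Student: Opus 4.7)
The plan is to extend the inequality \eqref{2811strongprop} from open sets $A' \in \mA_0$ to Borel sets $B' \in \mB_0$ via outer regularity of the relevant finite Borel measures, combined with a truncation construction that exploits the $W_X^{1,p}$-continuity of $F$ and the measure property of $F(u, \cdot)$. I would fix $u, v \in \anso{\Om}$, $s, r \geq 0$ and $B' \in \mB_0$ satisfying \eqref{newstrongdue} a.e.\ on $B'$, and then argue as follows.

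First, under the hypotheses the set functions $F(u, \cdot), F(v, \cdot)$ extend to finite Borel measures on $\mB_0$: the growth bound \eqref{boundsobolev} on Borel sets, together with $B' \Subset \Om$, $a \in \lu$ and $u, v \in \anso{\Om}$, gives $F(u, B'), F(v, B') < \infty$. The measure $E \mapsto \int_E \omega_s(x, r) \, {\rm d} x$ is likewise finite on $\mB_0$ since $\omega_s(\cdot, r) \in L^1_{loc}(\Om)$. By outer regularity of finite Borel measures on the metric space $\Om$, I would select a decreasing sequence $(A_n)_n \subseteq \mA_0$ with $B' \subseteq A_n \Subset \Om$ such that $F(u, A_n) \to F(u, B')$, $F(v, A_n) \to F(v, B')$, $\int_{A_n} \omega_s(x, r) \, {\rm d} x \to \int_{B'} \omega_s(x, r) \, {\rm d} x$, and $|A_n \setminus B'| \to 0$.

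The main obstacle is that the strong condition $(\omega X)$ applied on $A_n$ requires the pointwise bounds in \eqref{newstrongdue} to hold a.e.\ on all of $A_n$, while we have them only on $B'$. To overcome this, I would construct, for each $n$, modifications $u_n, v_n \in \anso{\Om}$ enjoying: (i) locality-compatible agreement with $u, v$ on $B'$, in the sense that $F(u_n, B') = F(u, B')$ and $F(v_n, B') = F(v, B')$; (ii) the six bounds of \eqref{newstrongdue} a.e.\ on $A_n$; and (iii) the convergence $u_n \to u$, $v_n \to v$ in $\anso{\Om}$ as $n \to \infty$. A natural candidate combines the truncation $T_s$ at level $s$ with a smooth cut-off function supported in $A_n$ and equal to $1$ on a suitable open set sitting between the support of the relevant cut-off and $A_n$; the Leibniz rule of Proposition \ref{708leibniz} then governs $Xu_n, Xv_n$. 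The delicate part is enforcing all six pointwise bounds simultaneously (especially those on $|Xu_n|$ and $|Xv_n|$) while preserving agreement with $u, v$ in a locality-usable way on $B'$ and convergence in $\anso{\Om}$; this is the technical heart of the argument.

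With such $u_n, v_n$ in place, applying the strong condition $(\omega X)$ on the open set $A_n$ yields
\[
|F(v_n, A_n) - F(u_n, A_n)| \leq \int_{A_n} \omega_s(x, r) \, {\rm d} x.
\]
Splitting $F(\cdot, A_n) = F(\cdot, B') + F(\cdot, A_n \setminus B')$ and using \eqref{boundsobolev} applied to the uniformly bounded $u_n, v_n$ on $A_n \setminus B'$, together with $|A_n \setminus B'| \to 0$, forces $F(u_n, A_n \setminus B'), F(v_n, A_n \setminus B') \to 0$. Combining this with (i) and passing to the limit as $n \to \infty$, where the $W_X^{1,p}$-continuity of $F$ is available to justify any residual limiting steps, we obtain the desired estimate \eqref{newstronguno}.
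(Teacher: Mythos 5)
There is a genuine gap, and it sits exactly where you place ``the technical heart of the argument'': the construction of the modifications $u_n,v_n$ is not carried out, and the candidates you suggest cannot deliver it. Truncation at level $s$ controls $|u_n|$ but not $|Xu_n|$, and multiplication by a cut-off $\varphi$ produces, via Proposition \ref{708leibniz}, the extra term $uX\varphi$, which is not bounded by $s$; more fundamentally, producing a function that agrees with $u$ on an arbitrary Borel set $B'$, satisfies $|Xu_n|\le s$ a.e.\ on an open neighbourhood $A_n$ of $B'$, and still lies in $\anso{\Om}$ and converges to $u$ there, is an extension problem that truncations and cut-offs do not solve. Note also that item (i) of your plan invokes locality on the Borel set $B'$, which is neither among the hypotheses of the proposition nor available from the paper's definition of locality, which is stated for open sets only.

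The paper circumvents any modification by a two-step argument. First, for $u,v\in\anso{\Om}\cap C^\infty(\Om)$, continuity upgrades the a.e.\ bounds \eqref{newstrongdue} on $B'$ to the relaxed bounds with constants $s+\frac1M$ and $r+\frac1m$ on the shrinking open neighbourhoods $A_n\supseteq B'$; the strong condition $(\omega X)$ is then applied on each $A_n$ with these relaxed parameters, one passes to the limit in $n$, and the slack is removed by the Monotone Convergence Theorem using \eqref{incruno} and \eqref{incrdue}. Second, for general $u,v$, one approximates by smooth $u_h,v_h$ via Theorem \ref{2402meyersserrinx} and, instead of modifying the functions, excises the set $Z_h\subseteq B'$ where the relaxed bounds fail: $|Z_h|\to 0$ by convergence in measure, the first step applies on the Borel set $B'\setminus Z_h$, and the contribution of $Z_h$ is controlled by \eqref{boundsobolev} and the Dominated Convergence Theorem. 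The double relaxation $s\mapsto s+\frac1M$, $r\mapsto r+\frac1m$ and the excision of small bad sets are the two ideas your proposal is missing.
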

\begin{proof}
It is not restrictive to assume that
$c_1=c_2=1$.
First we show the thesis for regular functions $u,v\in\anso{\Om}\cap C^\infty(\Om)$. Let us fix $B'\in\mB_0$, and $s,r,$ such that \eqref{newstrongdue} holds, and let us take $m,M>0$.
Since $F(u,\cdot)$ and $F(v,\cdot)$ are Borel measures, there exists a decreasing sequence of open sets $(A_n)_n\subseteq\mA$ such that $B'=\bigcap_{n=1}^\infty A_n$
and moreover
\begin{equation*}
    F(u,B')=\lim_{n\to\infty}F(u,A_n)\quad\text{and}\quad F(v,B')=\lim_{n\to\infty}F(v,A_n).
\end{equation*}
Furthermore, as $B'\Subset\Om$, we can assume that $A_n\Subset\Om$ for each $n\in\mathbb{N}$. Finally, as $u,v\in\ciuno{A_0}$ we can assume that 
\begin{align*}
		&|u(x)|,|v(x)|,|Xu(x)|,|Xv(x)|\leq s+\frac{1}{M}\\
		&|u(x)-v(x)|,|Xu(x)-Xv(x)|\leq r+\frac{1}{m}
\end{align*}
		for any $x\in A_n$ and for any $n\geq 0$.
We obtain that
\begin{equation*}
    \begin{split}
        |F(u,B')-F(v,B')|&=\lim_{n\to\infty}|F(u,A_n)-F(v,A_n)|\\
        &\leq\lim_{n\to\infty}\int_{A_n}w_{s+\frac{1}{M}}\left(x,r+\frac{1}{m}\right){\rm d} x\\
        &=\int_{B'}w_{s+\frac{1}{M}}\left(x,r+\frac{1}{m}\right){\rm d} x.
    \end{split}
\end{equation*}
Therefore, thanks to \eqref{incruno}, \eqref{incrdue} and the Monotone Convergence Theorem we conclude that 
\begin{equation*}
    \begin{split}
        \lvert F(u,B')-F(v,B') \rvert&\leq\lim_{m\to\infty}\lim_{M\to\infty}\int_{B'}w_{s+\frac{1}{M}}\left(x,r+\frac{1}{m}\right){\rm d} x\\
        &=\lim_{m\to\infty}\int_{B'}w_{s}\left(x,r+\frac{1}{m}\right){\rm d} x\\
        &=\int_{B'}w_{s}(x,r){\rm d} x.
    \end{split}
\end{equation*}
Let now $B'\in\mB_0$, $u,v\in\anso{\Om}$ and $s,r,$ such that \eqref{newstrongdue} holds, and fix again $m,M>0$. By Theorem \ref{2402meyersserrinx} there are two sequences $(u_h)_h,(v_h)_h\subseteq\anso{\Om}\cap C^\infty(\Om)$ converging respectively to $u$ and $v$ in the strong topology of $\anso{\Om}$. Therefore, thanks to the previous step and the continuity of the functional, we get that
\begin{equation*}
    \lvert F(u,B')-F(v,B') \rvert=\lim_{h\to\infty}|F(u_h,B')-F(v_h,B')|.
\end{equation*}
Now we want to estimate the right term.
For doing this let us define, for any $h\geq 0$,
\begin{align*}
A_h:=\left\{x\in B'\,:\,|u_h(x)|>s+\frac{1}{M}\right\}\quad B_h:=\left\{x\in B'\,:\,|v_h(x)|>s+\frac{1}{M}\right\}\\
C_h:=\left\{x\in B'\,:\,|Xu_h(x)|>s+\frac{1}{M}\right\}\quad D_h:=\left\{x\in B'\,:\,|Xv_h(x)|>s+\frac{1}{M}\right\}\\
E_h:=\left\{x\in B'\,:\,|u_h(x)-v_h(x)|>r+\frac{1}{m}\right\}\\ F_h:=\left\{x\in B'\,:\,|Xu_h(x)-Xv_h(x)|>r+\frac{1}{m}\right\},
\end{align*}
and let
\begin{equation}\label{zetah}
    Z_h:=A_h\cup B_h\cup C_h\cup D_h\cup E_h\cup F_h.
\end{equation}
We claim that 
\begin{equation*}
    \lim_{h\to\infty}|Z_h|=0.
\end{equation*}
Here we only show that $\lim_{h\to\infty}|A_h|=0$, being the other parts of the proof similar. Assume that $x\in A_h$ and assume that \eqref{newstrongdue} holds in $x$. Then it follows that
\begin{equation*}
    |u_h(x)-u(x)|\geq|u_h(x)|-|u(x)|> 
    \frac{1}{M}.
\end{equation*}
and hence 
$$x\in\left\{z\in\Om\,:\,|u(z)-u_h(z)|>\frac{1}{M}\right\}.$$
Since $u_h$ converges to $u$ in $\anso{\Om}$, then in particular $u_h$ converges to $u$ in measure, and so the measure of the right set goes to zero as $h$ goes to infinity. 
We can now estimate in this way.
\begin{equation*}
\begin{split}
    \lim_{h\to\infty}&|F(u_h,B')-F(v_h,B')|\leq\liminf_{h\to\infty}|F(u_h,B'\setminus Z_h)-F(v_h,B'\setminus Z_h)|+|F(u_h,Z_h)-F(v_h,Z_h)|\\
    &\leq\int_{B'}w_{s+\frac{1}{M}}\left(x,r+\frac{1}{m}\right)+\liminf_{h\to\infty}|F(u_h,Z_h)|+|F(v_h,Z_h)|\\
    &\leq\int_{B'}w_{s+\frac{1}{M}}\left(x,r+\frac{1}{m}\right){\rm d} x+\liminf_{h\to\infty}2\int_{Z_h}|a(x)|{\rm d} x\\
     &+\liminf_{h\to\infty}\int_{Z_h}|u_h|^p {\rm d} x+\int_{Z_h}|v_h|^p {\rm d} x+\int_{Z_h}|Xu_h|^p {\rm d} x+\int_{Z_h}|Xv_h|^p {\rm d} x\\
     &\leq\int_{B'}w_{s+\frac{1}{M}}\left(x,r+\frac{1}{m}\right) {\rm d} x+\liminf_{h\to\infty}2\int_{Z_h}|a(x)|{\rm d} x\\
     &+\liminf_{h\to\infty} 2^{p-1}\left(\int_{Z_h}|u_h-u|^p {\rm d} x+ \int_{Z_h}|u|^p {\rm d} x+ \int_{Z_h}|Xu_h-Xu|^p {\rm d} x+ \int_{Z_h}|Xu|^p {\rm d} x \right)\\
     &+\liminf_{h\to\infty} 2^{p-1}\left(\int_{Z_h}|v_h-v|^p {\rm d} x+  \int_{Z_h}|v|^p {\rm d} x+ \int_{Z_h}|Xv_h-Xv|^p {\rm d} x+ \int_{Z_h}|Xv|^p {\rm d} x \right)\\
     &\leq \int_{B'}w_{s+\frac{1}{M}}\left(x,r+\frac{1}{m}\right){\rm d} x +K\lim_{h\to\infty}\left(\|u-u_h\|_{\anso{\Om}}+\|v-v_h\|_{\anso{\Om}}\right)
    \\ &+\liminf_{h\to\infty}\int_{B'}\chi_{Z_h}b(x) {\rm d} x,
\end{split}
\end{equation*}
for a constant $K>0$ and a suitable function $b\in L^1(B')$. Therefore, thanks to the Dominated Convergence Theorem, we conclude that
\begin{equation*}
    |F(u,B')-F(v,B')|\leq\int_{B'}w_{s+\frac{1}{M}}\left(x,r+\frac{1}{m}\right){\rm d} x.
\end{equation*}
Arguing as in the first step and letting $M,m$ go to infinity, the thesis follows.
\end{proof}

\begin{prop}\label{strongcompatta}
Let $(F_h)_h$ be a sequence in $\clafto$ and assume that there exists a functional $F:\anso{\Om}\times\mA\scu[0,\infty]$ such that
\begin{equation*}\label{709gamma2}
    F(\cdot,A')=\Gamma(W_X^{1,p})-\lim_{h\to\infty}F_{h}(\cdot,A')\qquad\text{for any $A'\in\mA_0$.}
\end{equation*}
Then $F$ satisfies the strong condition $(\omega X)$ with respect to $\omega$.
\end{prop}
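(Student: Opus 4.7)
The goal is to show that the $\Gamma(\anso{\Om})$-limit $F$ satisfies the strong condition $(\omega X)$ with respect to the same family $\omega$. Fix $A'\in\mA_0$, parameters $s,r\geq 0$, and $u,v\in\anso{\Om}$ satisfying $|u|,|v|,|Xu|,|Xv|\leq s$ and $|u-v|,|Xu-Xv|\leq r$ a.e.\ on $A'$. The plan is to prove
\begin{equation*}
|F(v,A')-F(u,A')|\leq\int_{A'}\omega_s(x,r)\,{\rm d} x
\end{equation*}
by combining $\Gamma$-recovery sequences with Proposition \ref{strongborel} applied on a carefully chosen Borel subset of $A'$, followed by two successive limit passages.

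The main obstacle is that recovery sequences for $u$ and $v$ need not respect the pointwise bounds, even when they converge in $\anso{\Om}$. To sidestep this, fix $\epsilon>0$ and select recovery sequences $(u_h)_h,(v_h)_h\subseteq\anso{\Om}$ with $u_h\to u$, $v_h\to v$ in $\anso{\Om}$, $F_h(u_h,A')\to F(u,A')$ and $F_h(v_h,A')\to F(v,A')$; after extracting a subsequence, assume a.e.\ convergence of $u_h,Xu_h,v_h,Xv_h$. Mimicking the construction in the proof of Proposition \ref{strongborel} (cf.\ \eqref{zetah}), let $Z_h\subseteq A'$ denote the set where at least one of $|u_h|,|Xu_h|,|v_h|,|Xv_h|$ exceeds $s+\epsilon$ or one of $|u_h-u|,|Xu_h-Xu|,|v_h-v|,|Xv_h-Xv|$ exceeds $\epsilon$. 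Then $|Z_h|\to 0$, and on $B_h:=A'\setminus Z_h$ the four functions are bounded by $s+\epsilon$ while the differences $|u_h-v_h|,|Xu_h-Xv_h|$ are bounded by $r+2\epsilon$ via the triangle inequality. Since $F_h\in\clafto$ meets the hypotheses of Proposition \ref{strongborel} (it is a Borel measure, is $\anso{\Om}$-continuous by standard arguments based on the growth bound \eqref{onlybounded}, and satisfies strong $(\omega X)$ with respect to $\omega$), applying that proposition on the Borel set $B_h\in\mB_0$ yields
\begin{equation*}
|F_h(v_h,B_h)-F_h(u_h,B_h)|\leq\int_{B_h}\omega_{s+\epsilon}(x,r+2\epsilon)\,{\rm d} x.
\end{equation*}

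To handle the complementary contribution on $Z_h$, I use that $F_h$ is a measure and the growth bound \eqref{onlybounded} to estimate
\begin{equation*}
F_h(u_h,Z_h)\leq\int_{Z_h}\bigl(|a|+c_1|Xu_h|^p+c_2|u_h|^p\bigr)\,{\rm d} x,
\end{equation*}
and analogously for $v_h$. Since $u_h,v_h\to u,v$ and $Xu_h,Xv_h\to Xu,Xv$ in $L^p$, the four sequences $(|u_h|^p)_h$, $(|Xu_h|^p)_h$, $(|v_h|^p)_h$, $(|Xv_h|^p)_h$ are equi-integrable, hence $F_h(u_h,Z_h),F_h(v_h,Z_h)\to 0$ as $|Z_h|\to 0$. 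Decomposing $F_h(u_h,A')=F_h(u_h,B_h)+F_h(u_h,Z_h)$ and likewise for $v_h$, and exploiting absolute continuity of the integral to replace $B_h$ by $A'$ on the right-hand side, letting $h\to\infty$ gives $|F(v,A')-F(u,A')|\leq\int_{A'}\omega_{s+\epsilon}(x,r+2\epsilon)\,{\rm d} x$. Finally, as $\epsilon\to 0^+$, properties \eqref{incruno} and \eqref{incrdue} give $\omega_{s+\epsilon}(x,r+2\epsilon)\to\omega_s(x,r)$ pointwise a.e., with the family dominated for small $\epsilon$ by $\omega_{s+1}(x,r+2)\in L^1(A')$; dominated convergence on $A'$ then delivers the claim. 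I expect the subtlest step to be the control of $F_h(u_h,Z_h)$ and $F_h(v_h,Z_h)$, which crucially exploits the $\anso{\Om}$-convergence of the recovery sequences, ensuring that the $X$-gradients themselves are equi-integrable on shrinking sets; this is one of the reasons the argument is naturally phrased in the $\anso{\Om}$-topology rather than the $L^p$-topology.
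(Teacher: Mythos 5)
Your proposal is correct and follows essentially the same route as the paper: take recovery sequences in the $W_X^{1,p}$ topology, split $A'$ into a bad set $Z_h$ of vanishing measure and its complement, use the (Borel-set) strong condition with relaxed parameters on the good set and the growth bound plus equi-integrability on $Z_h$, then pass to the limit in $h$ and in the relaxation parameter. The only cosmetic difference is that you invoke the statement of Proposition \ref{strongborel} on the good sets $B_h$ as a black box, whereas the paper says to repeat the estimates of its proof; the underlying argument is identical.
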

\begin{proof}
Let $A'\in\mA_0$, $u,v\in\anso{\Om}$ and $s,r\geq 0$ such that \eqref{newstrongdue} holds, and fix $m,M>0$.
Let $(u_h)_h$ and $(v_h)_h$ be recovery sequences respectively for $u$ and $v$.
Then it follows that
\begin{equation*}
    |F(u,A')-F(v,A')|=\lim_{h\to\infty}|F_h(u_h,A')-F_h(v_h,A')|.
\end{equation*}
Notice that, since $F_h\in\clafto$ then it is a measure, it satisfies the strong condition $(\omega X)$ with respect to $(\omega_s)_{s\geq 0}$, and thanks to a slight variant of \cite[Theorem 3.1]{essverzpin}, it is $W_X^{1,p}$-continuous. Moreover, thanks to \eqref{onlybounded}, it satisfies \eqref{boundsobolev} for any $u\in\anso{\Om}$ and for any $B\in\mathcal{B}$. Therefore it satisfies the hypotheses of Proposition \ref{strongborel}. Hence, repeating exactly the same estimates performed in the proof of Proposition \ref{strongborel}, we conclude that
\begin{equation*}
    \lim_{h\to\infty}|F_h(u_h,A')-F_h(v_h,A')|\leq\int_{A'}\omega_s(x,r){\rm d} x,
\end{equation*}
and so the thesis follows.
\end{proof}

We are now in position to give the proof of Theorem \ref{mainsobolev1}.

\begin{proof}[Proof of Theorem \ref{mainsobolev1}]
Since $(F_h)_h\subseteq\clafto$, from Proposition \ref{firststepsobolev} we know that there exists a functional $F:\anso{\Om}\times\mA\scu[0,\infty]$ which is a measure, local, $W_X^{1,p}$-lower semicontinuous and satisfies \eqref{boundsobolev}, and such that \eqref{gammalimsobolev} holds. Moreover, thanks to Proposition \ref{strongcompatta}, $F$ satisfies the strong condition $(\omega X)$ with respect to $\omega$. Therefore $F$ satisfies all the hypotheses of \cite[Theorem 4.4]{essverzpin}, and so we conclude that $F\in\clafto$.
\end{proof}

\section{Further Remarks and Open Problems}
The classical strong and weak condition $(\omega)$ were introduced in \cite{buttazzodalmaso2} in order to guarantee the continuity of the candidate Lagrangian when proving an integral representation result. In particular, the strong condition $(\omega)$ guarantees that $f(x,\cdot,\cdot)$ is continuous, while the weak condition $(\omega)$ implies the continuity of $f(x,\cdot,\xi)$. Moreover, it is easy to see that the strong condition $(\omega)$ implies the weak condition $(\omega)$. 
Anyway, in many situations it is difficult to verify the strong condition $(\omega)$, whereas the weak condition $(\omega)$ is easier. On the other hand, if we require only the weak condition $(\omega)$, we have to add an extra hypothesis in order to get the equivalence, i.e. the weak*-sequential lower semicontinuity of the functional, which is well known (cf. \cite{acerbifusco}) to be equivalent to the convexity of $f(x,u,\cdot)$. In \cite{essverzpin}, inspired by \cite{buttazzodalmaso2}, the authors exploited these ideas in order to achieve two integral representation results when the local functional is not assumed to be convex.
In Section 3 we obtained a $\Gamma(L^p)-$compactness result for a class of convex integral functionals defined on $\lp{\Om}$, but we did not generalized it when the convexity assumption is dropped. On the other hand, in Section 4 we considered also the non-convex case, working in a suitable class of integral functionals where the strong condition $(\omega X)$ is required uniformly on the class. Therefore there are some questions still unsolved.
Let us begin by properly extending Definition \ref{nuovastrongdef}.
\begin{deff}
    If $\omega=(\omega_s)_{s\geq 0}$ is a family of locally integrable moduli of continuity (cf. Definition \ref{nuovastrongdef}),
    we say that a functional $F:\lp{\Om}\times\mA\scu[0,+\infty]$ satisfies the \emph{weak condition $(\omega X)$ with respect to $\omega$} if
		\begin{equation}\label{2811strongprop}
			|F(u+r,A')-F(u,A')|\leq\int_{A'}\omega_s(x,|r|)\, {\rm d} x
		\end{equation}
		for any $s\geq 0$, $A'\in\mA_0$, $r\in\Ru$, $u\in \anso{\Om}$ such that 
		\begin{equation*}
		|u(x)|,|v(x)+r|,|r|\leq s
		\end{equation*}
		for a.e. $x\in A'$.
\end{deff}
Indeed, if $\omega$ is a fixed family of moduli of continuity it is reasonable to ask:
\begin{itemize}
    \item if the subclass of $\claft$ of those integral functionals satisfying the strong condition $(\omega X)$ with respect to $\omega$ is $\Gamma(L^p)-$compact;
    \item if the subclass of $\claft$ of those integral functionals satisfying the weak condition $(\omega X)$ with respect to $\omega$ and which are weakly*-seq. l.s.c. is $\Gamma(L^p)-$compact;
    \item if the subclass of $\clagt$ of those integral functionals satisfying the weak condition $(\omega X)$ with respect to $\omega$ and which are weakly*-seq. l.s.c. is $\Gamma(W_X^{1,p})-$compact.
\end{itemize}
In view of Proposition \ref{firststeplp}, Proposition \ref{firststepsobolev} and the integral representation results in \cite{essverzpin}, the only questions left open are the following.
\begin{itemize}
    \item[$(a)$] Is the $\Gamma(L^p)$-limit of a sequence of (possibly not weakly*-seq. l.s.c.) functionals a weakly*-seq. l.s.c functional?
    \item[$(b)$] Is the $\Gamma(W_X^{1,p})-$limit of a sequence of (possibly not weakly*-seq. l.s.c.) functional a weakly*-seq. l.s.c functional?
    \item[$(c)$] Does the $\Gamma(L^p)$-limit of a sequence of functionals satisfy the weak condition $(\omega X)$ provided that the sequence does satisfy it?
    \item[$(d)$] Does the $\Gamma(W_X^{1,p})-$limit of a sequence of functionals satisfy the weak condition $(\omega X)$ provided that the sequence does satisfy it?
    \item[$(e)$] Does the $\Gamma(L^p)$-limit of a sequence of functionals satisfy the strong condition $(\omega X)$ provided that the sequence does satisfy it?
\end{itemize}
Unfortunately we have not been able to answer to questions $(b),(c)$ and $(e)$. Anyway we are going to show that the questions $(a)$ and $(d)$ have a positive answer.

\begin{prop}[Answer to question $(d)$]\label{weakcompatta}
Let $\omega$ be a family of locally integrable moduli of continuity. Let $(F_h)_h$ be a sequence in $\clagt$ and assume that each $F_h$ satisfies the weak condition $(\omega X)$ with respect to $\omega$. Assume in addition that there exists a functional $F:\anso{\Om}\times\mA\scu[0,+\infty]$ such that
\begin{equation*}\label{709gamma2}
    F(\cdot,A')=\Gamma(W_X^{1,p})-\lim_{h\to+\infty}F_{h}(\cdot,A')\qquad\text{for any $A'\in\mA_0$.}
\end{equation*}
Then $F$ satisfies the weak condition $(\omega X)$ with respect to $\omega$.
\end{prop}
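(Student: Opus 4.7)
The plan is to adapt the proof of Proposition \ref{strongcompatta} to the weak condition $(\omega X)$. The key observation is that, since $r \in \Ru$ is a constant, $X(u+r) = Xu$ in $\anso{\Om}$, so translating by $r$ preserves the $W_X^{1,p}$-convergence of sequences. The main technical difficulty, as in Proposition \ref{strongcompatta}, is that recovery sequences do not a priori satisfy the pointwise bounds that are required to invoke the weak condition for the $F_h$'s.

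First I would fix $s \geq 0$, $A' \in \mA_0$, $r \in \Ru$ and $u \in \anso{\Om}$ such that $|u(x)|, |u(x)+r|, |r| \leq s$ for a.e.\ $x \in A'$. I would take a recovery sequence $(u_h)_h \subseteq \anso{\Om}$ for $u$ with respect to $F(\cdot,A')$. Since $X(u_h+r)=Xu_h$, also $u_h+r \to u+r$ in $\anso{\Om}$, so by the $\Gamma$-liminf inequality
\[
F(u+r,A') - F(u,A') \leq \limsup_{h\to\infty} \bigl| F_h(u_h+r,A') - F_h(u_h,A') \bigr|.
\]
The task reduces to estimating the right-hand side.

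To do so, for each $M \in \NN$ I would introduce the bad set
\[
Z_{h,M} := \bigl\{ x \in A' : |u_h(x)| > s + 1/M \bigr\} \cup \bigl\{ x \in A' : |u_h(x)+r| > s + 1/M \bigr\},
\]
and observe that $|Z_{h,M}| \to 0$ as $h \to \infty$ (for fixed $M$) because $u_h \to u$ in measure on $A'$ and $|u|, |u+r| \leq s$ a.e.\ on $A'$. On $A' \setminus Z_{h,M}$ the triple $(u_h, u_h+r, r)$ respects the bound $s+1/M$, so the weak condition $(\omega X)$ for $F_h$, extended from open to Borel sets through the fact that $F_h \in \clagt$ is an integral functional (arguing via Lebesgue points on its density, exactly as in Proposition \ref{strongborel}), yields
\[
\bigl| F_h(u_h+r, A'\setminus Z_{h,M}) - F_h(u_h, A'\setminus Z_{h,M}) \bigr| \leq \int_{A'\setminus Z_{h,M}} \omega_{s+1/M}(x,|r|)\,{\rm d} x.
\]
On $Z_{h,M}$ I would use the growth condition \eqref{onlybounded} together with the fact that $u_h \to u$ strongly in $\anso{\Om}$: the sequences $\{|u_h|^p\}$ and $\{|Xu_h|^p\}$ are equi-integrable, and $a \in L^1(\Om)$, so
\[
F_h(u_h, Z_{h,M}) + F_h(u_h+r, Z_{h,M}) \leq \int_{Z_{h,M}} \bigl[ 2a(x) + 2c_1 |Xu_h|^p + c_2(|u_h|^p + |u_h+r|^p) \bigr]\,{\rm d} x \longrightarrow 0
\]
as $h \to \infty$, since $|Z_{h,M}| \to 0$.

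Combining the two estimates via the measure property of $F_h$, passing to the $\limsup$ in $h$, and then letting $M \to \infty$ using the continuity \eqref{incrdue} of $s \mapsto \omega_s(x,|r|)$ together with the dominated convergence theorem (with dominating function $\omega_{s+1}(\cdot,|r|) \in L^1(A')$), I would obtain
\[
F(u+r,A') - F(u,A') \leq \int_{A'} \omega_s(x,|r|)\,{\rm d} x.
\]
The reverse inequality follows by the symmetric argument: one takes a recovery sequence $(v_h)_h$ for $u+r$, observes that $v_h-r \to u$ in $\anso{\Om}$, and notices that the pointwise bounds on $(v_h, v_h-r, -r)$ with constant $s$ are the same as before. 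The main obstacle in the whole argument is the extension of the weak condition from open to Borel sets; this is the analog of the role played by Proposition \ref{strongborel} in the proof of Proposition \ref{strongcompatta}, and for integral functionals in $\clagt$ it is a routine differentiation-of-measures argument that I would state without repeating the details.
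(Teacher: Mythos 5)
Your proof is correct and follows essentially the route the paper itself indicates, since the paper dispenses with this proposition by declaring it ``totally similar'' to Propositions \ref{strongborel} and \ref{strongcompatta}; you have simply supplied the details, including the one genuinely new observation needed here, namely that one should test the $\Gamma$-liminf inequality with the translated sequence $u_h+r$ (and, for the reverse inequality, with $v_h-r$), which is legitimate because $X(u_h+r)=Xu_h$ preserves convergence in $\anso{\Om}$. The remaining ingredients --- the bad sets $Z_{h,M}$, convergence in measure, equi-integrability from strong $\anso{\Om}$-convergence, the Borel-set extension of the condition, and the passage $M\to\infty$ via \eqref{incrdue} and dominated convergence --- all match the mechanism of Propositions \ref{strongborel} and \ref{strongcompatta}.
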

\begin{proof}
The proof of this result is totally similar to the proofs of Proposition \ref{strongborel} and Proposition \ref{strongcompatta}, and so we take it for granted.
\end{proof}

\begin{prop}[Answer to question $(a)$]
Let $F_h:\lp{\Om}\times\mA\scu[0,+\infty]$ be a sequence of (not necessary integral) functionals, and assume that there exists a functional $F:\lp{\Om}\times\mA\scu[0,\infty]$ which is a measure and such that
\begin{equation*}\label{709gamma2}
    F(\cdot,A')=\Gamma(L^p)-\lim_{h\to+\infty}F_{h}(\cdot,A')\qquad\text{for any $A'\in\mA_0$.}
\end{equation*}
Then $F$ is weakly*-seq. l.s.c
\end{prop}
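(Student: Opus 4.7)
The plan is to reduce the weak$^*$-sequential lower semicontinuity of $F$ to the $L^p$-lower semicontinuity that automatically comes with being a $\Gamma(L^p)$-limit, exploiting the fact that weak$^*$ convergence in $W^{1,\infty}(\Omega)$ forces $L^p$ convergence, and then bridging the gap between $\mathcal{A}_0$ (where the $\Gamma$-limit property is assumed) and the full class $\mathcal{A}$ via the measure property of $F$.

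More precisely, I would proceed in four short steps. First, fix $A \in \mathcal{A}$, $u \in W^{1,\infty}(\Omega)$, and a sequence $(u_k)_k \subseteq W^{1,\infty}(\Omega)$ with $u_k \rightharpoonup^{*} u$ in $W^{1,\infty}(\Omega)$. Weak$^*$ convergence yields a uniform bound on $\|u_k\|_{W^{1,\infty}(\Omega)}$, so the $u_k$ are equi-Lipschitz on $\Omega$; combining boundedness of $\Omega$, Arzelà–Ascoli, and uniqueness of the weak$^*$ limit, the whole sequence converges to $u$ uniformly, hence in $L^p(\Omega)$. Second, since $F(\cdot,A')$ is a $\Gamma(L^p)$-limit for every $A' \in \mathcal{A}_0$, it is $L^p$-lower semicontinuous on $L^p(\Omega)$, and in particular
\begin{equation*}
F(u,A') \leq \liminf_{k\to\infty} F(u_k,A') \qquad \text{for every } A' \in \mathcal{A}_0.
\end{equation*}
Third, invoke the measure property: for any $A' \in \mathcal{A}_0$ with $A' \Subset A$, monotonicity of the Borel measure $F(u_k,\cdot)$ gives $F(u_k,A') \leq F(u_k,A)$, so $F(u,A') \leq \liminf_k F(u_k,A)$. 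Fourth, construct an exhaustion $A_n := \{x \in A : \mathrm{dist}(x,A^c) > 1/n\} \Subset A$; these sets lie in $\mathcal{A}_0$ (because $\Omega$ is bounded) and increase to $A$, so by continuity from below of the measure $F(u,\cdot)$,
\begin{equation*}
F(u,A) = \lim_{n\to\infty} F(u,A_n) \leq \liminf_{k\to\infty} F(u_k,A),
\end{equation*}
which is exactly the desired weak$^*$-sequential lower semicontinuity.

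I do not anticipate any serious obstacle here: the argument is essentially a soft one assembled from standard ingredients. The only subtle point is the passage between the assumption (stated on $\mathcal{A}_0$) and the thesis (required on $\mathcal{A}$), and this is handled cleanly by the inner regularity encoded in the measure property of $F$. It is worth noting that no information on the $F_h$ beyond the existence of the $\Gamma(L^p)$-limit is used, so the result genuinely follows from the compatibility between weak$^*$-$W^{1,\infty}$ convergence and strong $L^p$ convergence on bounded domains.
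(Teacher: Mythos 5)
Your proposal is correct and follows essentially the same route as the paper's proof: weak$^*$ convergence in $W^{1,\infty}(\Omega)$ upgraded to strong $L^p$ convergence, the $L^p$-lower semicontinuity of the $\Gamma(L^p)$-limit on sets of $\mA_0$, monotonicity of the measure $F(u_k,\cdot)$, and finally inner regularity (your exhaustion $A_n\nearrow A$) to pass from $A'\Subset A$ to $A$ itself. The only cosmetic difference is that you spell out the Arzel\`a--Ascoli and exhaustion steps that the paper quotes as ``well known'' and ``inner regular''.
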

\begin{proof}
Let $A\in\mA$, $A'\in\mA$ with $A'\Subset A$, $u\in\winf{\Om}$ and take a sequence $(u_h)_h\subseteq\winf{\Om}$ which is weakly*-convergent to $u$. Then, since $A'\Subset A$, it is well known that $u_h$ converges to $u$ strongly in $L^\infty(A')$, and so in particular strongly in $L^p(A')$. Being $F(\cdot,A')$ a $\Gamma(L^p)-$limit, it is $L^p-$lower semicontinuous. Moreover, being $F$ a measure, it is also increasing. These facts imply that
\begin{equation*}
    F(u,A')\leq\liminf_{h\to\infty}F(u_h,A')\leq\liminf_{h\to\infty}F(u_h,A).
\end{equation*}
Since $F$ is inner regular and since $A'\Subset A$ is arbitrary, the conclusion follows.
\end{proof}

\end{document}